\theoremstyle{plain}
\newtheorem{thm}{Theorem}
\newtheorem{lem}{Lemma}
\newcommand{\thickhline}{%
    \noalign {\ifnum 0=`}\fi \hrule height 1pt
    \futurelet \reserved@a \@xhline
}
\providecommand\implies{\DOTSB\;\Longrightarrow\;}
\providecommand\impliedby{\DOTSB\;\Longleftarrow\;}
\newcolumntype{"}{@{\hskip\tabcolsep\vrule width 1pt\hskip\tabcolsep}}
\newcommand{\RR}{\emph{New1}}
\newcommand{\BVV}{\emph{New2}}
\begin{document}

\articletype{}

\title{Solving the signed Roman domination and signed total Roman domination problems with exact and heuristic methods}

\author{
\name{V. Filipovi\'c\textsuperscript{a}, D. Mati\'c\textsuperscript{b} and A. Kartelj\textsuperscript{a}\thanks{D. Mati\'c,  email: dragan.matic@pmf.unibl.org}}
\affil{\textsuperscript{a}University of Belgrade, Faculty of Mathematics, Studentski trg 16, Belgrade, Serbia \textsuperscript{b}University of Banjaluka, Faculty of and Natural Science and Mathematics, Mladena Stojanovi\'ca 2, Banja Luka, Bosnia and Herzegovina}
}

\maketitle

\begin{abstract}
In this paper we deal with the signed Roman domination and signed total Roman domination problems. For each problem we propose two integer linear programming (ILP) formulations, the constraint programming (CP) formulation and variable neighborhood search (VNS) method. We present proofs for the correctness of the ILP formulations and a  polyhedral study in which we show that the polyhedrons of the two model relaxations are equivalent.

VNS uses specifically designed penalty function that allows the appearance of slightly infeasible solutions. The acceptance of these solutions directs the overall search process to the promising areas in the long run.

All proposed approaches are tested on the large number of instances. Experimental results indicate that all of them reach optimal solutions for the most of small and middle scale instances. Both ILP models have proven to be more successful than the other two methods.
\end{abstract}

\begin{keywords}
Roman domination problem; integer programming; variable neighborhood search; constraint programming; networks and graphs
\end{keywords}

\section{Introduction}
Roman domination has been intensively investigated in recent years. Scientific interest in this problem arose after the papers "Defendens imperium romanum: a classical problem in military strategy" \citep{revelle2000defendens} and "Defend the Roman Emprire!" \citep{stewart1999defend} were published in two well known American journals. In order to protect the Roman territory from enemy attacks in the period of reduced power of the empire in the 4th century, the Emperor of Rome, Constantine the Great, issued a decree which defines the strategy of locating and moving legions through the regions in order to increase  security of the Empire. A region is considered unsecured if no legions are stationed, while regions are secured if  there is at least one legion there. Legions can move from a secure to an adjacent insecure region only if the old region is still secured, i.e. if one legion is ordered to move to the other region, then another legion still remains there. The Emperor's intention was to organize his legions in such a way that as few legions as
possible are stationed, but they still can defend the Empire according to his strategy.

Motivated by the problem that the glorious Roman emperor Constantine the Great faced in the 4th century, \citep{cockayne2004roman} formally introduced the Roman domination problem (RDP) as a mathematical problem and opened the wide area for further research. There are several variants of RDP, and they are mostly related to the conditions in which the vertices or edges are dominated, or to the introduction of some extra properties to the starting problem.

 Among many invariants of the problem, we notice some of them:
 independent Roman domination \citep{targhi2012properties,chellali2012note},
   weak Roman domination \citep{henning2003defending,cockayne2003secure},
   total Roman domination \citep{ahangar2016total},
  edge Roman domination \citep{chang2014edge},
   signed Roman edge domination \citep{ahangar2016signed},
   Roman k-domination \citep{hansberg2009upper,kammerling2009roman,bouchou2014relations},
  twin signed Roman domination \citep{sheikholeslami2016twin}
signed Roman k-domination \citep{henning2015signed}
Roman k-tuple domination number of a graph \citep{kazemi2014roman},
distance Roman domination \citep{aram2013distance},
strong Roman domination with multiple attacks \citep{henning2003defending,alvarez2015strong},
mixed Roman Domination \citep{ahangar2015mixed},
restrained Roman domination in graphs \citep{pushpam2015restrained},
 signed qRoman k-domination number \citep{adanza2015signed} and
double Roman domination \citep{beeler2016double}.

Domination in graphs is a developing research area with various applications to different kinds of networks, like social networks, biological networks, distributed networks etc. \citep{behtoei2014signed}. For example, some facility location problems can be modelled by Roman domination \citep{chambers2009extremal}. Dominating function can be interpreted as a cost function: if a vertex is assigned a number 1, that indicates that the vertex is chosen as a low-range hub which can serve only itself, if 2 is assigned to the vertex, that means that the vertex is a high-range hub that can serve neighboring locations, while the vertices assigned  a number less than 1 are not hubs. Distribution of the cheaper low-range hubs and more expensive high-range hubs in a wide network can be now considered as a Roman domination, where the task is to minimize the total number of the used hubs, with respect to the requirements that any non-hub node is covered by a high-range hub.

In this paper we deal with the signed Roman domination problem (SRDP) introduced in \citep{ahangar2014signed} and its sub-variant, signed total Roman domination problem(STRDP), introduced in \citep{volkmann2015signed}.
In the SRDP, there is a dominating function $f:V\rightarrow\{-1,1,2\}$ that satisfies two conditions:
for each vertex the sum of the values assigned to a vertex and its neighbors is at least 1 and for every vertex $i$ for which $f(i)=-1$ holds that it is adjacent to at least one vertex $j$ for which $f(j)=2$.
In the STRDP, the dominating function $f:V\rightarrow\{-1,1,2\}$  has to satisfy slightly different first condition: for each vertex the sum of the values assigned exactly to its neighbors is at least 1. The second condition is the same, i.e. for every vertex $i$ for which $f(i)=-1$ there must exists at least one vertex $j$ adjacent to $i$, satisfying $f(j)=2$.


The signed Roman domination number (respectively signed total Roman domination number) is then defined as $\min \sum_{i \in V}{f(i)}$ over all functions satisfying the stated conditions for SRDP, respectively STRDP.



\section{Previous work}
In this section we briefly review the previous results on SRDP and STRDP and position our work in the literature.

Signed Roman domination problem was introduced in \citep{ahangar2014signed}. The authors presented several lower and upper bounds on the signed Roman domination number of arbitrary graphs, and determined the exact value of the signed Roman domination number for some special classes of graphs, like complete graphs, cycles and paths.
Signed Roman $k$-domination problem (SRkDP), introduced in \citep{henning2016signedk}, generalizes the SRDP. In SRkDP, for each vertex the sum of the values assigned to that vertex and its neighbors is at least $k$ and obviously, SRDP is a special case of SRkDP if $k=1$.

 Both SRDP and its generalization SRkDP have been  studied in literature from theoretical point of view.
In \citep{behtoei2014signed}, the signed Roman domination number of join graphs is analyzed. Beside some results for general join graphs, signed Roman domination number is determined for join of cycles, wheels, fans and friendship graphs. The signed Roman domination number for generalized Petersen graphs $P(n, k)$  is determined for   $k =  1$ and  $k=3$  in \citep{shirkolsigned2014}. SRDP is also considered in digraphs. In \citep{sheikholeslami2014signed} the authors presented several bounds, also determining the exact values for some special classes of digraphs.

 \cite{henning2015signed} and \cite{shao2017signed} present several theoretical results for SRkDP. In \citep{henning2015signed}, a tight lower bound on the SR2DP of a tree is determined. In \citep{shao2017signed}, it is proven that SRkDP is NP-complete even when restricted to bipartite and planar graphs. In the same paper, the exact values for thin torus graphs $C_3 \Box C_n$ and $C_4 \Box C_n$ are determined.

Similarly as for SRDP, results  in the literature for STRDP are focused on theoretical contributions.

Following the ideas in \citep{ahangar2014signed}, Volkmann  initiated the study of signed total Roman dominating number in \citep{volkmann2015signed}. The author found different bounds for signed total Roman dominating number and also determined the signed total Roman domination number of
some classes of graphs. In a consecutive work \citep{volkmann2016signed}, the same author continued for study STRDP and analyzed the signed total Roman domination number for some classes of graphs.
In a recent paper, \citet{zec2021signed} presented exact values and (lower and upper) bounds for signed (total) Roman domination number for several classes of planar graphs.

As mentioned above, SRDP and SRkDP have been studied only theoretically, by proposing results on  computational complexity and lower or upper bounds  either for  graphs in general or for certain graph classes. On the other side, in literature one cannot find any method for computational solving the mentioned problems.

 In our paper we mainly investigate SRDP and STRDP from the point of integer linear programming and variable neighborhood search, as methods for calculating the exact value (or at least upper bound)  of signed (total) roman domination number for an arbitrary graph.

\subsection{ILP formulations for related problems}
\label{subsection:ilp}
ILP models have been proposed for many variants of Roman domination problems. The first ILP formulation for the basic  Roman domination problem can be found  in an early work \citep{revelle2000defendens}. It was derived from the well-known problem, the Location Set Covering Problem \citep{toregas1971location}. \cite{revelle2000defendens} introduced the model two types of binary variables (usually denoted as \textbf{x} and \textbf{y}), indicating whether the corresponding vertices are assigned at least one (variables \textbf{x}) or two (variables \textbf{y}) legions or not. Another idea, presented, for example, in  \citep{burger2013binary}, also includes  two types of binary variables (also denoted as \textbf{x} and \textbf{y}), where the variables \textbf{x} (respectively \textbf{y}) are used to indicate if exactly one (respectively two) legion(s) is (are) placed on the corresponding region.   These two  ideas of using described two types of binary variables are used to construct new or to improve existing ILP models in a number of subsequent papers. \cite{burger2013binary} presented ILP formulations for solving three static domination problems: determining domination number, total domination number and Roman domination number, as well as two dynamic domination problems which include moving the guards from one region to another: weak Roman domination (WRD) and secure domination.
In dynamic domination problems, a set of additional binary variables were used to indicate a swap of guards between regions. \cite{ivanovic2016improved} improved the Revelle's and Burger's models for Roman domination by relaxing a set of binary variables to real. In addition, Ivanovi\'c proved that a set of constraints in Burger's model can be omitted.

 \cite{ivanovic2018improved}  made a contribution on solving WRD, by proposing the new ILP model, keeping the same sets of \textbf{x} and \textbf{y} binary variables, but reducing the total number of additional variables and  constraints.

Depending on the variant of a particular domination problem, new constraints and variable sets were introduced in ILP models.
\cite{ma2019integer} proposed three ILP models for solving the weighted total domination problem (WTDP). In all three models, binary variables (variables \textbf{x}) were used to indicate if a vertex is chosen for the solution or not. Remaining sets of variables, together with the appropriate constraints were used to control correctly measure the objective function. Following this approach, \cite{alvarez2019exact} proposed two alternative ILP formulations for WTDP, that outperform the results presented in \citep{ma2019integer}.

\cite{cai2019integer} propose several ILP models for a stronger version of RDP - double Roman domination problem DRDP, where  at most three legions stationed at each region are allowed. In DRDP, three types of  vertices with assigned legions are present, so an additional set of binary variables can be introduced, indicating whether a region is assigned three regions or not. In \citep{cai2019integer}, different models for DRDP follow the same idea, apart from the definition of some binary variables, also using some theoretical observation to reduce the total number of variables and constraints.

Following the idea from \citep{revelle2000defendens} and \citep{burger2013binary}, in this paper ILP models are formulated for SRDP and STRDP, as it is described in Section \ref{ILPformulations}. To our knowledge, SRDP and STRDP have not been modelled previously by linear programming and in this paper we propose the first ILP models for solving the mentioned problems.

\section{Problem definitions}
\label{problemdefinitions}

In this section formal mathematical definitions of SRDP and STRDP are presented. Since the problems are similar, two formulations share the same foundation,
differing in one set of constraints.

We first introduce the basic notation. Let $G = (V,E)$ represent a finite and undirected graph with the vertex set $V$ and the set of edges denoted by $E$.
The $open$ neighborhood of vertex $i$ is $N(i)=\{j \in V | ij \in E\}$, while
the $closed$ neighborhood is defined as $N[i] = N(i) \cup \{i\}$.
Let us further introduce a function $f$ on the set of vertices:

\begin{equation}
f: V \rightarrow \{-1,1,2\}
\label{eq:f}
\end{equation}

In both SRD and STRD problems the modified Roman domination function $f$ has to satisfy the condition that for every vertex $i \in V$, such
that $f(i) = -1$, there must be a vertex $j \in N(i)$ for which $f(j)=2$. Formally:

\begin{equation}
f(i)=-1 \implies ((\exists j)\;j \in N(i) \land f(j)=2), \quad i \in V
\label{eq:c1}
\end{equation}

Additional set of constraints for STRDP is given by:

\begin{equation}
s^{tot}(i) = \sum_{j \in N(i)}{f(j)} \geq 1,\quad i \in V,
\label{eq:c2strdp}
\end{equation}

and for SRDP:

\begin{equation}
s(i)=\sum_{j \in N[i]}{f(j)} \geq 1,\quad i \in V.
\label{eq:c2srdp}
\end{equation}

Finally, the objective function, also called the weight function on a graph $G=(V,E)$ is defined as:

\begin{equation}
\omega(f) = \sum_{i \in V}{f(i)}
\label{eq:w}
\end{equation}

The signed total Roman domination number and the signed Roman domination number
correspond to the minimal values of $\omega(f)$ for the graph $G$ in STRDP and SRDP, respectively.

\subsection{Example}

Let us consider an example graph given in Figure~\ref{figExampleGraphBlank}.
This graph consists of 6 nodes denoted by letters A, B, C, D, E and F
, and 9 edges: $\{A, B\},\;\{A, F\},\; \{B, C\},\; \{B, E\},\; \{C, D\},\; \{C, E\},\; \{D, E\}$ and $\{D, F\}$.

In the case of SRDP an optimal assignment is: $f(A)=-1,\;f(B)=2,\; f(C)=-1,\; f(D)=-1,\; f(E)=1$ and $f(F)=2$ and the signed Roman domination number is equal to 2.
The first set of constraints (\ref{eq:c1}) is satisfied since all nodes that are assigned the value of -1 have a neighbor that has value of 2, i.e. nodes A and C have neighbor B, while node D is neighboring F.
The second set of constraints (\ref{eq:c2srdp}) is also satisfied, since the sum of assigned values in the closed neighborhoods of all nodes is greater or equal to 1, i.e. $s(A)=3,\; s(B)=3,\; s(C)=1,\; s(D)=1,\; s(E)=1$ and $s(F)=2$.

In the case of STRDP an optimal assignment is different: $f(A)=-1,\; f(B)=1, f(C)=-1,\; f(D)=2,\; f(E)=1,\;$ and $f(F)=2$. The total signed Roman domination number is equal to 4.
Constraints (\ref{eq:c1}) are satisfied because node A is a neighbor with F and node C is a neighbor with D.
Finally, the sum of assigned values in the open neighborhood of any node is strictly higher than zero: $s^{tot}(A)=3,\; s^{tot}(B)=1,\; s^{tot}(C)=4,\; s^{tot}(D)=2,\; s^{tot}(E)=2$ and $s^{tot}(F)=2$.

 \begin{figure}[h!]
        \centering
           \subfloat[Graph setting]{%
              \includegraphics{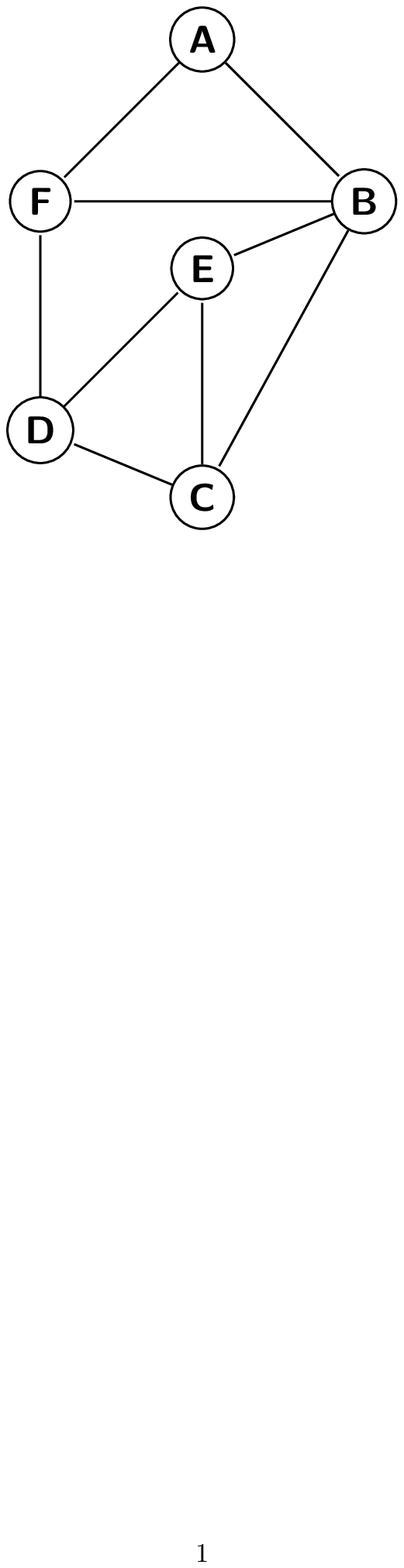}%
							\label{figExampleGraphBlank}
           }
           \subfloat[Optimal SRDP solution]{%
              \includegraphics{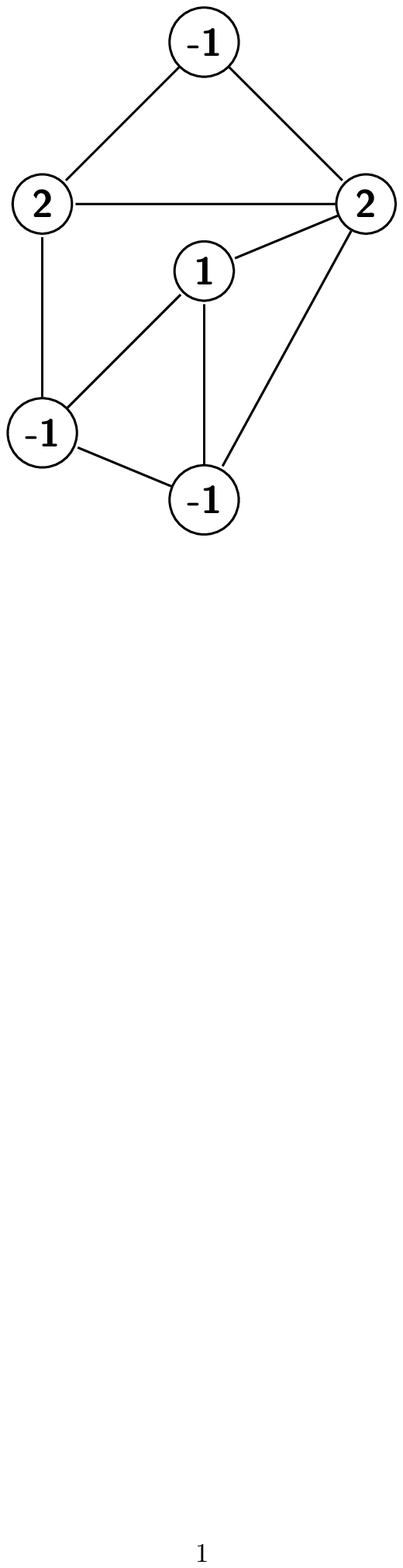}%
							\label{figExampleGraphSRDP}
           }
           \subfloat[Optimal STRDP solution]{%
              \includegraphics{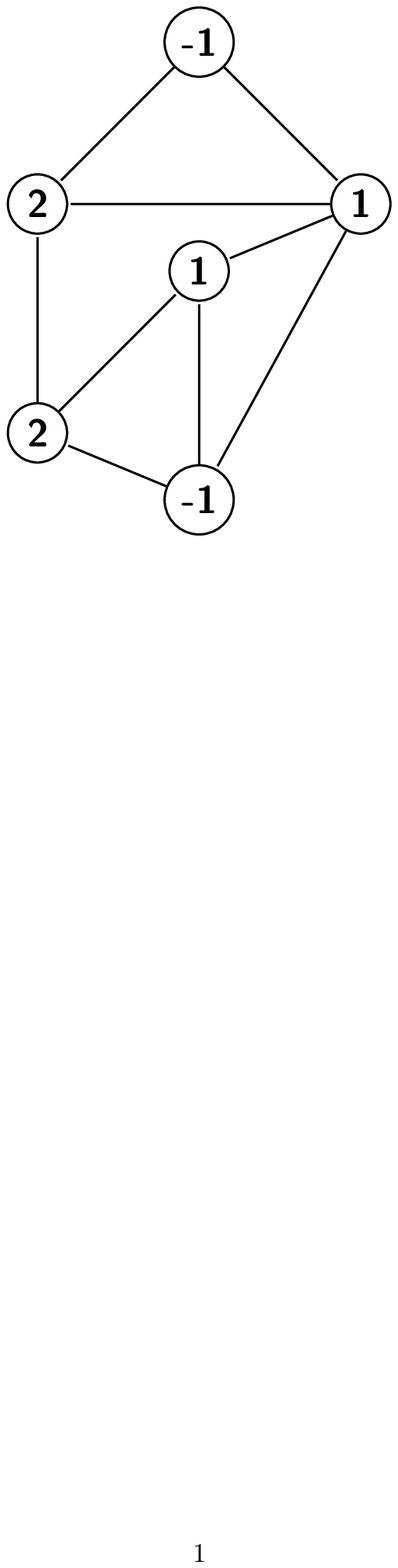}%
							\label{figExampleGraphSTRDP}
           }
           \caption{Example graph.}
           \label{figExampleGraph}
    \end{figure}

\section{Two ILP formulations}
\label{ILPformulations}

In this section, two new integer linear programming formulations for each of two considered problems are proposed.
We denote these two ILP models as  $\RR$   and $\BVV$ for SRDP, while
the ILP models for STRDP take labels $\RR^{tot}$ and $\BVV^{tot}$.

\subsection{\emph{\RR} formulations}

For the function $f$ introduced in (\ref{eq:f}) and vertices $i \in V$ let us define the following variables:

\begin{equation}
\label{eq:RRx}
x_i=
\begin{cases}
1, &f(i) \geq 1\\
0, &otherwise.
\end{cases}
\end{equation}

\begin{equation}
y_i=
\begin{cases}
1, &f(i) = 2\\
0, &otherwise.
\end{cases}
\label{eq:RRy}
\end{equation}

The $\RR^{tot}$ formulation is then given as follows:

\begin{equation}
\min \sum_{i \in V} {\left(2 x_i + y_i -1\right)}
\label{eq:rrSTRDPf}
\end{equation}

\begin{equation}
x_i \geq y_i, \quad i \in V
\label{eq:rrSTRDP1}
\end{equation}

\begin{equation}
x_i + \sum_{j \in N(i)}{y_j} \geq 1,\quad i \in V
\label{eq:rrSTRDP2}
\end{equation}

\begin{equation}
\sum_{j \in N(i)}{\left(2 x_j + y_j -1\right)} \geq 1,\quad i \in V
\label{eq:rrSTRDP3}
\end{equation}

\begin{equation}
x_i, y_i \in \{0,1\}, \quad i \in V
\label{eq:rrSTRDP4}
\end{equation}

The objective function (\ref{eq:rrSTRDPf}) corresponds to the previously introduced weight function (\ref{eq:w}), i.e. total number of assigned legions.
Constraints (\ref{eq:rrSTRDP1}) and (\ref{eq:rrSTRDP4}) ensure that values of $f$ function are from $\{-1,1,2\}$, thus it discards the invalid combination $(x_i,y_i) = (0,1)$. Constraints (\ref{eq:rrSTRDP2}) forbid the situation in which $f(i)=-1$ and exist no $j \in V(i)$ such that $f(j)=2$, meaning there cannot exist a weakly defended city without strongly defended neighbor city. Finally, constraints (\ref{eq:rrSTRDP3}) enforce that all cities are surrounded with cities whose total number of legions is at least 1 (including special case cities with negative number of legions).

The $\RR$ formulation, related to SRDP is similar to $\RR^{tot}$ since it has the same objective function (\ref{eq:rrSTRDPf}) and the same constraints (\ref{eq:rrSTRDP1}), (\ref{eq:rrSTRDP2}) and (\ref{eq:rrSTRDP4}).
The difference is given by replacing (\ref{eq:rrSTRDP3}) with the following
set of constraints:

\begin{equation}
\sum_{j \in N[i]}{\left(2 x_j + y_j -1\right)} \geq 1,\quad i \in V
\label{eq:rrSRDP3}
\end{equation}

which, as in $\RR^{tot}$, restricts the total number of surrounding legions, but, here, the cities own legion(s) are included in the count.

\subsection{\emph{\BVV} formulations}

For a function $f$ and vertices $i \in V$ let us define the following variables:

\begin{equation}
x_i=
\begin{cases}
1, &f(i) = 1\\
0, &otherwise.
\end{cases}
\label{eq:BVVx}
\end{equation}

\begin{equation}
y_i=
\begin{cases}
1, &f(i) = 2\\
0, &otherwise.
\end{cases}
\label{eq:BVVy}
\end{equation}

The $\BVV^{tot}$ formulation is then given as follows:

\begin{equation}
\min \sum_{i \in V} {\left(2 x_i + 3 y_i -1\right)}
\label{eq:bvvSTRDPf}
\end{equation}

\begin{equation}
x_i + y_i\leq 1, \quad i \in V
\label{eq:bvvSTRDP1}
\end{equation}

\begin{equation}
x_i + y_i +\sum_{j \in N(i)}{y_j} \geq 1,\quad i \in V
\label{eq:bvvSTRDP2}
\end{equation}

\begin{equation}
\sum_{j \in N(i)}{\left(2 x_j + 3 y_j -1\right)} \geq 1,\quad i \in V
\label{eq:bvvSTRDP3}
\end{equation}

\begin{equation}
x_i, y_i \in \{0,1\}, \quad i \in V
\label{eq:bvvSTRDP4}
\end{equation}

The objective function (\ref{eq:bvvSTRDPf}) corresponds to previously introduced weight function (\ref{eq:w}), i.e. total number of assigned legions.
Constraints (\ref{eq:bvvSTRDP1}) and (\ref{eq:bvvSTRDP4}) ensure that values of $f$ function are from $\{-1,1,2\}$, thus it discards the invalid combination $(x_i,y_i) = (1,1)$. Constraints (\ref{eq:bvvSTRDP2}) have the same significance as (\ref{eq:rrSTRDP2}) in $\RR^{tot}$ and $\RR$ models, i.e. they disallow legion constellation in which $f(i)=-1$ for some $i \in V$ and exist no $j \in V(i)$ such that $f(j)=2$. Constraints (\ref{eq:bvvSTRDP3}) enforce total number of surrounding legions to be at least 1 for each city.

$\BVV$ follows the same idea as $\BVV^{tot}$, so the objective function (\ref{eq:bvvSTRDPf}) and constraints (\ref{eq:bvvSTRDP1}), (\ref{eq:bvvSTRDP2}), (\ref{eq:bvvSTRDP4}) are the same. The only difference is in the usage of $closed$ neighborhood in SRDP.

\begin{equation}
\sum_{j \in N[i]}{\left(2 x_j + 3 y_j -1\right)} \geq 1,\quad i \in V
\label{eq:bvvSRDP3}
\end{equation}

\subsection{Equivalence proofs}
In this subsection we present the theorems which guarantee the equivalence between mathematical and two ILP formulations for each of two problems.

\begin{thm}
Optimal objective function value of the \emph{$\RR^{tot}$} formulation (\ref{eq:rrSTRDPf})-(\ref{eq:rrSTRDP3}) is equal to the optimal objective function value of the formulation given by (\ref{eq:f}), (\ref{eq:c1}) and (\ref{eq:c2strdp}).
\end{thm}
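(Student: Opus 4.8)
The plan is to establish a bijection between feasible solutions of the combinatorial formulation and feasible solutions of the ILP $\RR^{tot}$ that preserves the objective value, from which equality of the optima follows immediately. Given a Roman domination function $f$ satisfying (\ref{eq:f}), (\ref{eq:c1}) and (\ref{eq:c2strdp}), I would define $x_i$ and $y_i$ by (\ref{eq:RRx}) and (\ref{eq:RRy}); conversely, given a $(0,1)$-vector $(x,y)$ feasible for (\ref{eq:rrSTRDP1})--(\ref{eq:rrSTRDP4}), I would set $f(i) = 2x_i + y_i - 1$. The first step is to check that these two maps are mutually inverse and well defined: the three admissible values $f(i)\in\{-1,1,2\}$ correspond exactly to $(x_i,y_i)\in\{(0,0),(1,0),(1,1)\}$, and constraint (\ref{eq:rrSTRDP1}) together with integrality (\ref{eq:rrSTRDP4}) rules out the only remaining pair $(0,1)$; conversely $f$ taking values in $\{-1,1,2\}$ forces $x_i\ge y_i$. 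The key identity throughout is $f(i) = 2x_i + y_i - 1$, which also shows the objective (\ref{eq:rrSTRDPf}) equals $\omega(f)$ from (\ref{eq:w}) term by term.

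Next I would verify that the constraints match up. For condition (\ref{eq:c1}): if $f(i) = -1$, i.e. $x_i = 0$, then (\ref{eq:rrSTRDP2}) reads $\sum_{j\in N(i)} y_j \ge 1$, so some neighbor $j$ has $y_j = 1$, i.e. $f(j) = 2$; and the converse implication is read off the same inequality, noting that when $x_i = 1$ constraint (\ref{eq:rrSTRDP2}) is automatically satisfied. For condition (\ref{eq:c2strdp}): substituting $f(j) = 2x_j + y_j - 1$ into $s^{tot}(i) = \sum_{j\in N(i)} f(j)$ turns the requirement $s^{tot}(i)\ge 1$ into exactly (\ref{eq:rrSTRDP3}). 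So $f$ is feasible for the combinatorial model if and only if $(x,y)$ is feasible for $\RR^{tot}$, and the objective values agree.

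Finally I would spell out the conclusion: since the correspondence is a value-preserving bijection between the two feasible sets, every feasible objective value of one model is a feasible objective value of the other, hence their minima coincide; in particular both optima are attained (the feasible sets are finite and, as guaranteed by the problem being well posed, nonempty). I do not anticipate a genuine obstacle here — the argument is a routine change of variables — but the one point deserving care is the handling of the degenerate/automatically-satisfied cases of (\ref{eq:rrSTRDP2}) (when $x_i=1$) so that the biconditional with (\ref{eq:c1}) is clean, and making sure the well-definedness of both directions of the map (especially the exclusion of $(x_i,y_i)=(0,1)$) is argued from the stated constraints rather than assumed.
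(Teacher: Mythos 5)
Your proposal is correct and follows essentially the same route as the paper's proof: the change of variables $f(i)=2x_i+y_i-1$, the exclusion of $(x_i,y_i)=(0,1)$ via (\ref{eq:rrSTRDP1}) and (\ref{eq:rrSTRDP4}), the case analysis for (\ref{eq:rrSTRDP2}) versus (\ref{eq:c1}), and the direct translation of (\ref{eq:rrSTRDP3}) into (\ref{eq:c2strdp}). The only cosmetic difference is that you package the argument as a single value-preserving bijection between feasible sets, whereas the paper proves the two inequalities $opt_f \leq opt_{\RR^{tot}}$ and $opt_{\RR^{tot}} \leq opt_f$ separately.
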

\begin{proof}
($\implies$) Let a feasible solution of the  formulation (\ref{eq:rrSTRDPf})-(\ref{eq:rrSTRDP4}) be denoted by two $n$-dimensional vectors $\mathbf{x}=(x_1,...,x_n)$ and $\mathbf{y}=(y_1,...,y_n)$ where $n = |V|$.

Let $f$ be the function on the set $V$ defined as:

\begin{equation}\label{proof2:f}
f(i)=
\begin{cases}
-1,&x_i=y_i=0\\
1, &x_i=1, y_i=0\\
2, &x_i=1, y_i=1.
\end{cases}
\end{equation}

Because of the constraints (\ref{eq:rrSTRDP1}), the case  $(x_i,y_i)=(0,1)$ is not possible, so the function $f$ is well defined according to (\ref{eq:f}).

Let $i\in V$. To prove that the condition (\ref{eq:c2strdp}) is satisfied, we consider three possible cases:
\begin{enumerate}
\item $\underline{x_i=y_i=0}$: $f(i)=-1$ and $2 x_i+ y_i -1 = 2 * 0+ 0 -1 = -1;$
\item $\underline{x_i=1, y_i=0}$: $f(i)=1$ and $2 x_i+ y_i -1 = 2 * 1+ 0 -1 = 1;$
\item $\underline{x_i=1, y_i=1}$: $f(i)=2$ and $2 x_i+ y_i -1 = 2 * 1+ 1 -1 = 2.$
\end{enumerate}

In all three cases we have that for each $i\in V$, $f(i)=2 x_i+ y_i -1$. From the constraint (\ref{eq:rrSTRDP3}), we have

$1\leq \sum_{j\in N(i)}(2x_j+y_j-1)=\sum_{j\in N(i)}f(i)$ and (\ref{eq:c2strdp}) holds.

Let us prove (\ref{eq:c1}). Let $i \in V$ and $f(i)=-1$. From the definition (\ref{proof2:f}) of the function $f$, we have $x_i=0$. From (\ref{eq:rrSTRDP2}) and from the binary nature of the $\mathbf{y}$ variables ensured by (\ref{eq:rrSTRDP4}), we have:

$0+\sum_{j\in N(i)}y_j\geq 1\implies ((\exists j\in N(i))\ y_j=1)$.

Because of (\ref{eq:rrSTRDP1}) also $x_j=1$. From (\ref{proof2:f}), $f(j)=2$ and (\ref{eq:c1}) holds.

Let $f_{opt}$ be the optimal solution for the STRDP w.r.t. (\ref{eq:f}) - (\ref{eq:c2strdp}). Let $opt_{\RR^{tot}}(\mathbf{x},\mathbf{y})$ denote an optimal solution w.r.t.
$\RR^{tot}$ formulation (\ref{eq:rrSTRDPf})-(\ref{eq:rrSTRDP4}).

Then, $opt_{\RR^{tot}}(\mathbf{x},\mathbf{y}) = \sum_{i \in V}{2 \overline{x_i} +  \overline{y_i} -1} = \sum_{i \in V}{\overline{f(i)}}$ w.r.t. definitions (\ref{eq:RRx}) and (\ref{eq:RRy}). It is proven that $\overline{f}$ corresponds to one Roman domination function w.r.t. (\ref{eq:f})-(\ref{eq:c2srdp}).
Therefore the following holds: $opt_f \leq \sum_{i \in V}{\overline{f(i)}} = opt_{\RR^{tot}}(\mathbf{x},\mathbf{y})$.

($\impliedby$) Let $f$ be a function satisfying (\ref{eq:f})-(\ref{eq:c2srdp}). We define the variables $\mathbf{x}$ and $\mathbf{y}$ according to (\ref{eq:RRx}) and (\ref{eq:RRy}).

Binary nature of the variables $\mathbf{x}$ and $\mathbf{y}$, i.e. the constraints (\ref{eq:rrSTRDP4}) are directly implied from the definitions (\ref{eq:RRx}) and (\ref{eq:RRy}).

Also, from the definition of the variables $\mathbf{x}$ and $\mathbf{y}$, for each $i\in V$, the case $(x_i,y_i)=(0,1)$ is excluded, therefore the constraints (\ref{eq:rrSTRDP1}) are satisfied.

Further, for each $i\in V$ we have

\begin{enumerate}
\item $f(i)=-1 \implies x_i=y_i=0 \implies 2 x_i +  y_i -1 = -1$;
\item $f(i)=1 \implies x_i=1, y_i=0 \implies 2 x_i +  y_i -1 = 1$;
\item$f(i)=2 \implies x_i=1, y_i=1 \implies 2 x_i + y_i -1 = 2$.
\end{enumerate}

In all three cases we have that $f(i)=2 x_i +  y_i -1 $. Let us prove the constraints (\ref{eq:rrSTRDP3}).

Let $i\in V$. From (\ref{eq:c2strdp}) we have

$1\leq \sum_{j\in N(i)}f(j) = \sum_{j\in N(i)}(2 x_i +  y_i -1 )$.

Let us now prove (\ref{eq:rrSTRDP2}). Let $i\in V$. We analyze three cases:
\begin{enumerate}
\item $f(i) = 1 \implies x_i=1$ and (\ref{eq:rrSTRDP2}) is satisfied because $\mathbf{y}$ variables are non-negative.
\item $f(i) = 2 \implies x_i=1$ again and (\ref{eq:rrSTRDP2}) is satisfied because of the same reason
\item $f(i)= -1 \implies x_i=0$. Because of (\ref{eq:c1}) and the definition of $\mathbf{y}$ variables, we have

 $((\exists j\in N(i)) f(i)=2)\implies ((\exists j\in N(i))\ y_j=1)\implies x_i+\sum_{j\in N(i)}y_i\geq 1$ and (\ref{eq:rrSTRDP2}) is satisfied.

\end{enumerate}

Let $opt_f$ be the optimal solution of the STRD problem, i.e.  $f$ is the function for which  $\sum_{i\in V}f(i)$ is minimal.  Let $\overline{\mathbf{x}}$ and $\overline{\mathbf{y}}$ be the variables of the \emph{$\RR^{tot}$} model, defined according to (\ref{eq:RRx}) and (\ref{eq:RRy}). It is proven that $\overline{\mathbf{x}}$ and $\overline{\mathbf{y}}$ satisfy the constraints (\ref{eq:rrSTRDPf})-(\ref{eq:rrSTRDP4}) and therefore,

$obj_{\RR^{tot}}\leq \sum_{i\in V}(2\overline{x_i}+\overline{y_i}-1)=\sum_{i\in V}f(i)=opt_f$.

\end{proof}


\begin{thm}
Optimal objective function value of the \emph{$\RR$} formulation (\ref{eq:rrSTRDPf}), (\ref{eq:rrSTRDP1}), (\ref{eq:rrSTRDP2}), (\ref{eq:rrSRDP3}) and (\ref{eq:rrSTRDP4}) is equal to the optimal objective function value of the formulation given by (\ref{eq:f}), (\ref{eq:c1}) and (\ref{eq:c2srdp}).
\end{thm}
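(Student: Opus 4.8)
The plan is to follow the proof of the preceding theorem almost verbatim, since the only structural difference between the $\RR$ and $\RR^{tot}$ formulations is that the counting constraint (\ref{eq:rrSRDP3}) ranges over the closed neighborhood $N[i]$ instead of the open neighborhood $N(i)$ used in (\ref{eq:rrSTRDP3}); this mirrors exactly the change from the open-neighborhood condition (\ref{eq:c2strdp}) to the closed-neighborhood condition (\ref{eq:c2srdp}) in the combinatorial model. The consistency constraints (\ref{eq:rrSTRDP1}), (\ref{eq:rrSTRDP4}) and the defense constraint (\ref{eq:rrSTRDP2}) are identical in both ILP models, so the parts of the argument dealing with condition (\ref{eq:c1}) carry over with no change.

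$(\implies)$ First I would take an arbitrary feasible solution $(\mathbf{x},\mathbf{y})$ of (\ref{eq:rrSTRDPf}), (\ref{eq:rrSTRDP1}), (\ref{eq:rrSTRDP2}), (\ref{eq:rrSRDP3}), (\ref{eq:rrSTRDP4}) and define $f$ on $V$ by (\ref{proof2:f}). Constraint (\ref{eq:rrSTRDP1}) forbids the pair $(x_i,y_i)=(0,1)$, so $f$ is a well-defined function $V\to\{-1,1,2\}$, and a three-case check gives $f(i)=2x_i+y_i-1$ for every $i\in V$, the vertex $i$ itself included. Then (\ref{eq:rrSRDP3}) becomes $\sum_{j\in N[i]} f(j) = \sum_{j\in N[i]}(2x_j+y_j-1)\ge 1$, which is precisely (\ref{eq:c2srdp}). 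For (\ref{eq:c1}): if $f(i)=-1$ then $x_i=0$, so (\ref{eq:rrSTRDP2}) forces $\sum_{j\in N(i)} y_j \ge 1$, hence $y_j=1$ for some $j\in N(i)$; by (\ref{eq:rrSTRDP1}) that $j$ also has $x_j=1$, so $f(j)=2$. Thus $f$ satisfies (\ref{eq:f}), (\ref{eq:c1}), (\ref{eq:c2srdp}) with $\omega(f)=\sum_{i\in V}(2x_i+y_i-1)$, and applying this to an optimal ILP solution gives $opt_f \le obj_{\RR}$.

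$(\impliedby)$ Conversely, starting from a function $f$ satisfying (\ref{eq:f}), (\ref{eq:c1}), (\ref{eq:c2srdp}), I would define $\mathbf{x}$ and $\mathbf{y}$ by (\ref{eq:RRx}) and (\ref{eq:RRy}). The binary conditions (\ref{eq:rrSTRDP4}) and the exclusion of $(x_i,y_i)=(0,1)$, i.e. (\ref{eq:rrSTRDP1}), are immediate from the definitions, and again $f(i)=2x_i+y_i-1$ by the same case analysis. Constraint (\ref{eq:rrSRDP3}) is then just (\ref{eq:c2srdp}) rewritten over $N[i]$, while (\ref{eq:rrSTRDP2}) follows from (\ref{eq:c1}) exactly as in the preceding proof (the cases $f(i)=1$ and $f(i)=2$ give $x_i=1$, and $f(i)=-1$ yields a neighbor $j$ with $f(j)=2$, hence $y_j=1$). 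So $(\mathbf{x},\mathbf{y})$ is feasible for the $\RR$ model with objective value $\omega(f)$, giving $obj_{\RR}\le opt_f$. Combining the two inequalities yields the claimed equality. I do not anticipate any genuine obstacle; the only point worth stating carefully is that $f(i)=2x_i+y_i-1$ holds at $i$ itself, so that the contribution of $i$ to the closed-neighborhood sum in (\ref{eq:rrSRDP3}) is exactly $f(i)$ — the sole place where this proof departs from the open-neighborhood one.
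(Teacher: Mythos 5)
Your proof is correct and is exactly the adaptation of the proof of Theorem~1 that the paper has in mind when it omits the proof of this theorem as ``similar''; the only substantive change, replacing $N(i)$ by $N[i]$ and using the identity $f(i)=2x_i+y_i-1$ at the vertex $i$ itself, is handled properly. No issues.
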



The proof of Theorem 2 will be omitted since it is similar to the proof of Theorem~1.

\begin{thm}
Optimal objective function value of the \emph{$\BVV^{tot}$} formulation (\ref{eq:bvvSTRDPf})-(\ref{eq:bvvSTRDP4}) is equal to the optimal objective function value of the formulation given by (\ref{eq:f}), (\ref{eq:c1}) and (\ref{eq:c2strdp}).
\end{thm}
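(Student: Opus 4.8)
The plan is to prove Theorem~3 exactly as Theorem~1 was proved, establishing the two inequalities $opt_f\le opt_{\BVV^{tot}}$ and $opt_{\BVV^{tot}}\le opt_f$ separately; the only new ingredient is that the variables $\mathbf{x},\mathbf{y}$ now encode $f$ differently, which changes the per-vertex arithmetic identity. For the ($\implies$) direction I would start from a feasible $(\mathbf{x},\mathbf{y})$ of (\ref{eq:bvvSTRDPf})--(\ref{eq:bvvSTRDP4}) and define
\[
f(i)=\begin{cases}-1,& x_i=y_i=0\\ 1,& x_i=1,\ y_i=0\\ 2,& x_i=0,\ y_i=1,\end{cases}
\]
which is well defined because (\ref{eq:bvvSTRDP1}) forbids $(x_i,y_i)=(1,1)$, and which maps $V$ into $\{-1,1,2\}$ as (\ref{eq:f}) requires. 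The key observation, verified by checking the three cases, is the identity $f(i)=2x_i+3y_i-1$ for every $i\in V$ (the coefficient of $y_i$ being $3$, not $1$, precisely because here $y_i=1$ does not force $x_i=1$). Substituting this identity inside the sum over $N(i)$ turns (\ref{eq:bvvSTRDP3}) into (\ref{eq:c2strdp}). For (\ref{eq:c1}), if $f(i)=-1$ then $x_i=y_i=0$, so (\ref{eq:bvvSTRDP2}) gives $\sum_{j\in N(i)}y_j\ge 1$, hence, the $y_j$ being binary, some $j\in N(i)$ has $y_j=1$, i.e.\ $f(j)=2$. Since moreover the objective value equals $\sum_{i\in V}(2x_i+3y_i-1)=\sum_{i\in V}f(i)=\omega(f)$, every feasible $\BVV^{tot}$ value is realized by a feasible $f$, so $opt_f\le opt_{\BVV^{tot}}$.

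For the ($\impliedby$) direction I would take $f$ satisfying (\ref{eq:f}), (\ref{eq:c1}), (\ref{eq:c2strdp}) and define $\mathbf{x},\mathbf{y}$ by (\ref{eq:BVVx}) and (\ref{eq:BVVy}). Binariness (\ref{eq:bvvSTRDP4}) is immediate from the definitions, and since $f(i)$ cannot be both $1$ and $2$ we never have $x_i=y_i=1$, which gives (\ref{eq:bvvSTRDP1}). A three-case check again yields $f(i)=2x_i+3y_i-1$, so (\ref{eq:c2strdp}) becomes (\ref{eq:bvvSTRDP3}) termwise. To verify (\ref{eq:bvvSTRDP2}): if $f(i)\in\{1,2\}$ then $x_i+y_i=1$ and the inequality holds because the remaining $y_j$ terms are non-negative; if $f(i)=-1$ then $x_i=y_i=0$ and (\ref{eq:c1}) supplies a neighbor $j$ with $f(j)=2$, i.e.\ $y_j=1$, so $\sum_{j\in N(i)}y_j\ge 1$. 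The objective once more equals $\omega(f)$, giving $opt_{\BVV^{tot}}\le opt_f$. Combining the two inequalities yields equality of the optimal values.

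I do not expect a genuine obstacle here: the argument is a routine case analysis once the right identity is isolated. The one place that demands care is the bookkeeping of the constant and the coefficient of $y_i$ in $2x_i+3y_i-1$ under the $\BVV$ encoding, since an off-by-one or a wrong coefficient there would break the termwise translation of (\ref{eq:bvvSTRDP3}) into (\ref{eq:c2strdp}) and of the objective into $\omega(f)$. After confirming that identity, every constraint matches its mathematical counterpart directly, so nothing beyond careful case checking is needed; as with Theorem~2, the full write-up could even be abbreviated by appealing to the structure of the proof of Theorem~1.
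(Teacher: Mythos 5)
Your proposal is correct and follows essentially the same route as the paper's own proof: both directions are established via the per-vertex identity $f(i)=2x_i+3y_i-1$ under the $\BVV$ encoding, with the same case analyses for constraints (\ref{eq:bvvSTRDP1})--(\ref{eq:bvvSTRDP3}) and the same pairing of (\ref{eq:c1}) with (\ref{eq:bvvSTRDP2}) and of (\ref{eq:c2strdp}) with (\ref{eq:bvvSTRDP3}). No gaps; your write-up is in fact slightly tidier about which mathematical condition corresponds to which ILP constraint.
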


\begin{proof}
($\implies$) Let us denote a feasible solution to formulation (\ref{eq:bvvSTRDPf})-(\ref{eq:bvvSTRDP4}) by two $n$-dimensional vectors $\mathbf{x}=(x_1,...,x_n)$ and $\mathbf{y}=(y_1,...,y_n)$ where $n = |V|$ and let $f$ be a function on the set $V$. The following holds:

\begin{equation}
f(i)=
\begin{cases}
-1,&x_i=y_i=0\\
1, &x_i=1, y_i=0\\
2, &x_i=0, y_i=1.
\end{cases}
\end{equation}

The function $f$ is well defined function because the set of constraints (\ref{eq:bvvSTRDP1}) of $\BVV^{tot}$ disallows the remaining possibility $(x_i,y_i)=(1,1)$. Hence, the function $f$ is defined w.r.t. (\ref{eq:f}).


Let us prove that $f$ satisfies the condition (\ref{eq:c2strdp}). Let $i \in V$ be a vertex. We first state the following:

\begin{enumerate}
\item $\underline{x_i=y_i=0}$: $f(i)=-1$ and $2 x_i+3 y_i -1 = 2 * 0+ 3*0 -1 = -1;$
\item $\underline{x_i=1, y_i=0}$: $f(i)=1$ and $2 x_i+3 y_i -1 = 2 * 1+ 3*0 -1 = 1;$
\item $\underline{x_i=0, y_i=1}$: $f(i)=2$ and $2 x_i+3 y_i -1 = 2 * 0+ 3*1 -1 = 2.$
\end{enumerate}

Since in all three cases $f(i)$ is equal to the expression $2 x_i+3 y_i - 1$, from (\ref{eq:bvvSTRDP3}) we get
$1\leq \sum_{j \in N(i)}{(2 x_j + 3 y_j - 1 )}= \sum_{j \in N(i)}f(i)$ and (\ref{eq:c2strdp}) is satisfied.


Proving that (\ref{eq:c1}) holds also requires analysis by cases and consideration of constraints (\ref{eq:bvvSTRDP2}):

\begin{enumerate}
\item $\underline{x_i=1}$: $x_i + y_i\leq 1 \implies y_i=0 \implies f(i)=1$, so this case is irrelevant;
\item $\underline{y_i=1}$: $x_i + y_i\leq 1 \implies x_i=0 \implies f(i)=2$, so this case is irrelevant;
\item $\underline{x_i=1, y_i=1}$: this case is not feasible;
\item $\underline{x_i=0,\ y_i=0 \implies f(i)=-1}$. From (\ref{eq:bvvSTRDP2}) and because of the binary nature of $\mathbf{y}$ variables, we have

 $x_i+y_i+\sum_{j \in N(i)}y_j \geq 1 \implies (\exists j \in N(i))\ y_j=1 \implies (\exists j \in N(i)) f(j)=2$. Therefore  (\ref{eq:c1}) is satisfied.
\end{enumerate}

Let $opt_f$ denote optimal solution w.r.t. formulation (\ref{eq:f})-(\ref{eq:c2strdp}), and let $opt_{\BVV^{tot}}(\mathbf{x},\mathbf{y})$ denote an optimal solution w.r.t.
$\BVV^{tot}$ formulation (\ref{eq:bvvSTRDPf})-(\ref{eq:bvvSTRDP4}).
Then, $opt_{\BVV^{tot}}(\mathbf{x},\mathbf{y}) = \sum_{i \in V}{2 \overline{x_i} + 3 \overline{y_i} -1} = \sum_{i \in V}{\overline{f(i)}}$ w.r.t. definitions (\ref{eq:BVVx}) and (\ref{eq:BVVy}). Since $\bar{f}$ corresponds to one possible solution w.r.t. formulation (\ref{eq:f})-(\ref{eq:c2srdp}), it is at most good as $opt_f$, and therefore the following holds: $opt_f \leq \sum_{i \in V}{\overline{f(i)}} = opt_{\BVV^{tot}}(\mathbf{x},\mathbf{y})$.

($\impliedby$)

Let $f$ denote the function that satisfies (\ref{eq:f})-(\ref{eq:c2strdp}). Let us use definitions for the variables $\mathbf{x}$ and $\mathbf{y}$ given by (\ref{eq:BVVx}) and (\ref{eq:BVVy}). The following is true:

\begin{enumerate}
\item $f(i)=-1 \implies x_i=y_i=0 \implies 2 x_i + 3 y_i -1 = -1$;
\item $f(i)=1 \implies x_i=1, y_i=0 \implies 2 x_i + 3 y_i -1 = 1$;
\item$f(i)=2 \implies x_i=0, y_i=1 \implies 2 x_i + 3 y_i -1 = 2$.
\end{enumerate}

Afterwards, from (\ref{eq:c1}) it follows that $1 \leq \sum_{j \in N(i)}{f(j)} = \sum_{j \in N(i)}{(2 x_i + 3 y_i -1)}, \; i \in V$, hence, (\ref{eq:bvvSTRDP3}) is true.

Let us consider (\ref{eq:c2strdp}), i.e. $(f(i)=-1 \implies (\exists j)\;j \in N(i) \land f(j)=2), \quad i \in V$.
Three cases are of interest:

\begin{enumerate}
\item \underline{$f(i)=1$}: $x_i=1, y_i=0 \implies x_i+y_i+\sum_{j \in N(i)}{y_j} = 1+0+\sum_{j \in N(i)}{y_j} \geq 1$, because $\mathbf{x}$ and $\mathbf{y}$ variables cannot be negative;
\item \underline{$f(i)=2$}: $x_i=0, y_i=1 \implies x_i+y_i+\sum_{j \in N(i)}{y_j}=0+1+\sum_{j \in N(i)}{y_j} \geq 1$;
\item \underline{$f(i)=-1$}: from (\ref{eq:c1}) it follows $(\exists j) j \in N(i) \land f(j)=2 \implies y_j=1 \implies x_i+y_i+\sum_{j \in N(i)}{y_j}=0+0+\sum_{j \in N(i)}{y_j} \geq 1.$
\end{enumerate}

Therefore, (\ref{eq:bvvSTRDP2}) also holds.

Set of constraints (\ref{eq:bvvSTRDP1}) is also proven directly from variable definitions (\ref{eq:BVVx}) and (\ref{eq:BVVy}) and following considerations:

\begin{enumerate}
\item $f(i)=-1 \implies x_i=0, y_i=0 \implies x_i+y_i=0 \leq 1,\; i \in V$;
\item $f(i)=1 \implies x_i=1, y_i=0 \implies x_i+y_i=1 \leq 1,\; i \in V$;
\item $f(i)=2 \implies x_i=0, y_i=1 \implies x_i+y_i=1 \leq 1,\; i \in V$,
\end{enumerate}

Constraints that enforce variables to be binary (\ref{eq:bvvSTRDP4}) are directly implied by definitions (\ref{eq:BVVx}) and (\ref{eq:BVVy}).

Finally, if $opt_f$ denotes optimal solution w.r.t. (\ref{eq:f})-(\ref{eq:c2strdp}) it can be shown that: $opt_f = \sum_{i \in V}{\overline{f(i)}}= \sum_{i \in V}{(2 \overline{x_i} + 3 \overline{y_i} -1)} \geq  opt_{\BVV^{tot}}(\mathbf{x},\mathbf{y})$. This leads to the final conclusion that $ opt_{\BVV^{tot}}(\mathbf{x},\mathbf{y}) = opt_f$.
\end{proof}

\begin{thm}
Optimal objective function value of the \emph{$\BVV$} formulation (\ref{eq:bvvSTRDPf}), (\ref{eq:bvvSTRDP1}), (\ref{eq:bvvSTRDP2}), (\ref{eq:bvvSRDP3}) and (\ref{eq:bvvSTRDP4}) is equal to the optimal objective function value of the formulation given by (\ref{eq:f}), (\ref{eq:c1}) and (\ref{eq:c2srdp}).
\end{thm}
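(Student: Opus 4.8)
The plan is to mirror the proof of Theorem~3 almost verbatim, replacing the open neighborhood $N(i)$ by the closed neighborhood $N[i]$ in the domination-sum constraint, and to verify that nothing else changes. Concretely, given a feasible solution $(\mathbf{x},\mathbf{y})$ of the \emph{$\BVV$} formulation I would define $f:V\to\{-1,1,2\}$ by $f(i)=-1$ when $x_i=y_i=0$, $f(i)=1$ when $x_i=1,y_i=0$, and $f(i)=2$ when $x_i=0,y_i=1$. Constraint (\ref{eq:bvvSTRDP1}) excludes $(x_i,y_i)=(1,1)$, so $f$ is well defined with respect to (\ref{eq:f}), and a routine three-case check gives $f(i)=2x_i+3y_i-1$ for every $i\in V$, exactly as in Theorem~3.

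For the direction ($\implies$): substituting $f(j)=2x_j+3y_j-1$ into the SRDP constraint (\ref{eq:bvvSRDP3}) yields $1\le\sum_{j\in N[i]}(2x_j+3y_j-1)=\sum_{j\in N[i]}f(j)=s(i)$, which is precisely (\ref{eq:c2srdp}). For (\ref{eq:c1}), if $f(i)=-1$ then $x_i=y_i=0$, so (\ref{eq:bvvSTRDP2}) collapses to $\sum_{j\in N(i)}y_j\ge 1$; by integrality of the $\mathbf{y}$ variables there is $j\in N(i)$ with $y_j=1$, hence $f(j)=2$. Here only the open-neighborhood part of (\ref{eq:bvvSTRDP2}) is used, so this step is identical to the $\BVV^{tot}$ case. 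Consequently the $f$ associated to an optimal $(\mathbf{x},\mathbf{y})$ is feasible for (\ref{eq:f}), (\ref{eq:c1}), (\ref{eq:c2srdp}) with the same weight, giving $opt_f\le opt_{\BVV}(\mathbf{x},\mathbf{y})$.

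For the direction ($\impliedby$): given $f$ satisfying (\ref{eq:f}), (\ref{eq:c1}), (\ref{eq:c2srdp}), define $\mathbf{x},\mathbf{y}$ by (\ref{eq:BVVx})--(\ref{eq:BVVy}); integrality (\ref{eq:bvvSTRDP4}) and the exclusion of $(x_i,y_i)=(1,1)$, hence (\ref{eq:bvvSTRDP1}), are immediate, and once more $f(i)=2x_i+3y_i-1$ in all three cases. Constraint (\ref{eq:bvvSRDP3}) follows by rewriting (\ref{eq:c2srdp}) as $1\le\sum_{j\in N[i]}f(j)=\sum_{j\in N[i]}(2x_j+3y_j-1)$, while (\ref{eq:bvvSTRDP2}) follows from the same three-case analysis as in Theorem~3 (for $f(i)\in\{1,2\}$ the term $x_i+y_i=1$ already suffices; for $f(i)=-1$, condition (\ref{eq:c1}) supplies $j\in N(i)$ with $y_j=1$). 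Thus the associated $(\mathbf{x},\mathbf{y})$ is feasible for the \emph{$\BVV$} model with weight $\sum_{i\in V}f(i)$, so $opt_{\BVV}\le opt_f$, and combining the two inequalities gives equality of the optimal objective values.

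I do not expect a genuine obstacle: the only substantive difference from Theorem~3 is that the vertex $i$'s own contribution $2x_i+3y_i-1=f(i)$ now appears on the left-hand side of (\ref{eq:bvvSRDP3}), matching the closed-neighborhood sum $s(i)$ in (\ref{eq:c2srdp}); every other step carries over unchanged. The one place to stay careful is to keep using the open neighborhood $N(i)$ in (\ref{eq:bvvSTRDP2}) when handling condition (\ref{eq:c1}), since that constraint is not modified between the SRDP and STRDP variants.
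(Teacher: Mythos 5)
Your proposal is correct and is exactly the adaptation the paper intends: the paper omits this proof, stating only that it is ``very similar to the proof of Theorem~3,'' and you carry out precisely that adaptation, correctly noting that the only change is using the closed neighborhood $N[i]$ in the domination-sum constraint while the open neighborhood in (\ref{eq:bvvSTRDP2}) is untouched.
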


The proof of Theorem 4 is omitted since it is very similar to the proof of Theorem~3.


\section{Polyhedral analysis}

In this section we compare the two polyhedrons defined by two linear programming relaxations of $\BVV^{tot}$ and $\RR^{tot}$ formulations.


The idea is based on the following consideration:
Let $LP_{\BVV^{tot}}$ and $LP_{\RR^{tot}}$ be the linear programming relaxations of two ILP models $\BVV^{tot}$ and $\RR^{tot}$ respectively, that are obtained by relaxing the integral (binary) variables from $\{0,1\}$ to the continuous variables from $[0,1]$.

Let us denote the variables of the $\BVV^{tot}$ formulation by $(x',y')$.

By introducing the following substitution
\begin{equation}\label{eqn:subs}
x''=x'+y'  \textnormal{ and } y''=y'.
\end{equation}
we project the solutions of the  $LP_{\BVV^{tot}}$ model into the space of solutions $(x'',y'')$ from $LP_{\RR^{tot}}$. By the reverse substitution, we will project the solutions of the $LP_{\RR^{tot}}$ to the space of the $LP_{\BVV^{tot}}$  model.

By using the established correspondence between the relaxed solutions, we will  prove that two LP relaxations describe the same polyhedron.

Also, consider the objective function of $\BVV^{tot}$ model (\ref{eq:bvvSTRDPf}):
$$\sum_{i \in V} {\left(2 x'_i + 3 y'_i -1\right)}=\sum_{i \in V} {\left(2 x'_i + 2 y'_i+y'_i -1\right)}=\sum_{i \in V} {\left(2 x''_i + y''_i -1\right)}$$
and it is equal to the objective function (\ref{eq:rrSTRDPf}) of the $\RR^{tot}$ model in terms of $x'',y''$ variables.

\begin{lem}\label{lemma1}
The polyhedron defined by $LP_{\BVV^{tot}}$ is contained in the polyhedron of $LP_{\RR^{tot}}$.
\end{lem}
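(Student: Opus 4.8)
The plan is to take an arbitrary point $(x',y')$ that is feasible for $LP_{\BVV^{tot}}$, apply the substitution (\ref{eqn:subs}), i.e.\ set $x''_i = x'_i + y'_i$ and $y''_i = y'_i$ for every $i \in V$, and then check one by one that the resulting point $(x'',y'')$ satisfies every defining inequality of $LP_{\RR^{tot}}$, namely the relaxed versions of (\ref{eq:rrSTRDP1})--(\ref{eq:rrSTRDP4}). Since (\ref{eqn:subs}) is a fixed linear map, this immediately gives the asserted containment of polyhedra.

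First I would dispose of the box constraints $0 \le x''_i, y''_i \le 1$. The relations for $y''$ are immediate since $y''_i = y'_i$ and $y'_i \in [0,1]$ already holds in $LP_{\BVV^{tot}}$. For $x''$ we have $x''_i = x'_i + y'_i \ge 0$ by non-negativity of $x'$ and $y'$, while the upper bound $x''_i \le 1$ is exactly the constraint (\ref{eq:bvvSTRDP1}); this is the one place where that inequality is used. Next, (\ref{eq:rrSTRDP1}), which reads $x''_i \ge y''_i$, becomes $x'_i + y'_i \ge y'_i$ and holds because $x'_i \ge 0$. The constraint (\ref{eq:rrSTRDP2}) translates into $x'_i + y'_i + \sum_{j \in N(i)} y'_j \ge 1$, which is verbatim (\ref{eq:bvvSTRDP2}). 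Finally, under the substitution, $\sum_{j \in N(i)}(2 x''_j + y''_j - 1) = \sum_{j \in N(i)}(2 x'_j + 2 y'_j + y'_j - 1) = \sum_{j \in N(i)}(2 x'_j + 3 y'_j - 1) \ge 1$ by (\ref{eq:bvvSTRDP3}); the pointwise identity $2 x''_j + y''_j - 1 = 2 x'_j + 3 y'_j - 1$ is the same one already recorded for the two objective functions, so no fresh computation is required.

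There is essentially no hard step: the lemma is a direct consequence of the linear change of variables together with the non-negativity of the original variables. The only subtlety worth flagging in the write-up is that the upper bound $x''_i \le 1$, which at first glance has no counterpart among the $\BVV^{tot}$ inequalities, is precisely supplied by (\ref{eq:bvvSTRDP1}). I would also note explicitly that the verification uses only the relaxed (continuous) constraints and never integrality, so that the identical computation, run in the reverse direction via $x'_i = x''_i - y''_i$ and $y'_i = y''_i$, will give the opposite inclusion and hence the equality of the two polyhedra in the companion argument.
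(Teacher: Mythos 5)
Your proof is correct and follows essentially the same route as the paper's: the same substitution $x''=x'+y'$, $y''=y'$, the same use of (\ref{eq:bvvSTRDP1}) to get $x''_i\le 1$, of $x'_i\ge 0$ to get (\ref{eq:rrSTRDP1}), and the same pointwise identity $2x''_j+y''_j-1=2x'_j+3y'_j-1$ to transfer (\ref{eq:bvvSTRDP3}) into (\ref{eq:rrSTRDP3}). No gaps.
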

\begin{proof}
Let $x',y'$ be a feasible solution of $LP_{\BVV^{tot}}$ and let $x'',y''$ be solutions defined by the substitution (\ref{eqn:subs}).

We first show that the variables $x''$ and $y''$ are well defined w.r.t. the relaxed $\RR^{tot}$ model.
Because of $x',y'\in[0,1]$ and (\ref{eq:bvvSTRDP1}) we have that $x'_i+y'_i=x''_i\in [0,1]$.
Since  $y''=y'$, it is obvious that $y''_i\in[0,1]$.

Since $x'_i\geq 0$ it holds $x'_i+y'_i\geq y'_i  \implies x''_i\geq y''_i$ and therefore the  inequality (\ref{eq:rrSTRDP1})  holds.


The inequalities (\ref{eq:rrSTRDP2})  are obviously satisfied for $x''$ and $y''$, because of (\ref{eqn:subs}) and the constraints (\ref{eq:bvvSTRDP2}).

Let $i \in V$ and $j\in N(i)$. From
$$\sum_{j\in N(i)} {\left(2 x'_j + 3 y'_j -1\right)}=\sum_{j\in N(i)} {\left(2 x'_j + 2 y'_j+y'_j -1\right)}=\sum_{j\in N(i)} {\left(2 x''_j + y''_j -1\right)}$$
and  (\ref{eq:bvvSTRDP3}) we have
$\sum_{j\in N(i)} {\left(2 x''_j + y''_j -1\right)}\geq 1$, which proves that the inequality (\ref{eq:rrSTRDP3}) holds for $i \in V$.

All the constraints of the relaxed model $LP_{\RR^{tot}}$ are satisfied and the Lemma is proven.

\end{proof}

\begin{lem}\label{lemma2}
The polyhedron defined by $LP_{\RR^{tot}}$  is contained in the polyhedron of $LP_{\BVV^{tot}}$.
\end{lem}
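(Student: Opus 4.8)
The plan is to run the argument of Lemma~\ref{lemma1} in reverse. Given a feasible point $(x'',y'')$ of $LP_{\RR^{tot}}$, define $(x',y')$ by the inverse of the substitution (\ref{eqn:subs}), namely $x'_i = x''_i - y''_i$ and $y'_i = y''_i$ for every $i \in V$, and then verify that this point satisfies all constraints of $LP_{\BVV^{tot}}$.

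First I would check well-definedness, i.e. $x'_i, y'_i \in [0,1]$. Since $y'_i = y''_i$, the bounds on $y'_i$ are inherited directly. For $x'_i$, non-negativity is precisely where the inequality (\ref{eq:rrSTRDP1}) enters: $x''_i \geq y''_i$ gives $x'_i = x''_i - y''_i \geq 0$, and $x'_i = x''_i - y''_i \leq x''_i \leq 1$ because $y''_i \geq 0$. This also immediately yields (\ref{eq:bvvSTRDP1}), since $x'_i + y'_i = x''_i \leq 1$, together with the relaxed box constraints (\ref{eq:bvvSTRDP4}).

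Next I would translate the remaining two families of constraints. For (\ref{eq:bvvSTRDP2}) observe that $x'_i + y'_i + \sum_{j\in N(i)} y'_j = x''_i + \sum_{j\in N(i)} y''_j \geq 1$ by (\ref{eq:rrSTRDP2}). For (\ref{eq:bvvSTRDP3}) use the pointwise identity $2x'_j + 3y'_j - 1 = 2(x''_j - y''_j) + 3 y''_j - 1 = 2 x''_j + y''_j - 1$, which is the same rewriting already recorded before Lemma~\ref{lemma1} for the objective; hence $\sum_{j\in N(i)} (2x'_j + 3y'_j - 1) = \sum_{j\in N(i)} (2x''_j + y''_j - 1) \geq 1$ by (\ref{eq:rrSTRDP3}).

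Therefore $(x',y')$ is feasible for $LP_{\BVV^{tot}}$, which proves the stated inclusion; combined with Lemma~\ref{lemma1} this shows that the two relaxed polyhedra coincide, and, by the objective identity recalled above, that the two LP relaxations attain the same optimal value. There is no genuine obstacle here: the only step needing care is the non-negativity of $x'_i$, and it is exactly the constraint (\ref{eq:rrSTRDP1}) of the $\RR^{tot}$ model that guarantees it, with everything else being a direct substitution.
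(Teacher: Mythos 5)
Your proposal is correct and follows essentially the same route as the paper's own proof: the inverse substitution $x'_i = x''_i - y''_i$, $y'_i = y''_i$, with non-negativity of $x'_i$ supplied by (\ref{eq:rrSTRDP1}) and the remaining constraints obtained by direct rewriting. You even make the verification of (\ref{eq:bvvSTRDP2}) slightly more explicit than the paper does.
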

\begin{proof}
Let $x'',y''$ be a feasible solution of $LP_{\RR^{tot}}$.
We introduce the reverse projection of the variables $x'',y''$  to the variables  $x',y'$ of a solution for LP1.

\begin{equation}\label{eqn:subs2}
x'=x''-y''  \textnormal{ and } y'=y''.
\end{equation}

Firstly, we prove that the relaxed variables $x'_i,y'_i\in [0,1]$. Since $x''_i\leq 1$ and $y''_i\geq 0 \implies -y''_i\leq 0$ we have $x'_i = x''_i- y''_i\leq 1$. Further, from  (\ref{eq:rrSTRDP1}) we have $x'_i= x''_i-y''_i\geq 0$. Therefore, $x'_i\in [0,1]$. Since $y'_i=y''_i$, it is clear that $y'_i\in [0,1]$. That shows $x'$ and $y'$ are well defined. Further, $x''_i\leq 1 \implies x''_i-y''_i+y''_i\leq 1 \implies x'_i+y'_i\leq 1$ and the inequality (\ref{eq:bvvSTRDP1}) holds.

The inequality (\ref{eq:bvvSTRDP2}) is directly satisfied because of (\ref{eqn:subs2}) and (\ref{eq:rrSTRDP2}). Let $i \in V$ and $j\in N(i)$.  From (\ref{eq:rrSTRDP3}) we have $\sum_{j\in N(i)} {\left(2 x''_j + y''_j -1\right)}\geq 1$ $\implies$ $\sum_{j\in N(i)} {\left(2 x''_j -2y''_j + 3y''_j -1\right)}\geq 1 $ $\implies$ $\sum_{j\in N(i)} {\left(2 x'_j  + 3y'_j -1\right)}\geq 1 $, which proves (\ref{eq:bvvSTRDP3}).

\end{proof}
From these two lemmas we have the following result.

\begin{thm}\label{thmEQT}
Polyhedrons of two relaxed formulations $\BVV^{tot}$ and $\RR^{tot}$  are equivalent.

\end{thm}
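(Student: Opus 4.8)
The plan is to combine Lemmas \ref{lemma1} and \ref{lemma2} with the elementary observation that the substitution (\ref{eqn:subs}) is an invertible linear change of coordinates, so that the two relaxed polyhedra are carried onto one another by this map.

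First I would fix notation: let $P_{\BVV^{tot}}\subseteq\mathbb{R}^{2n}$ be the polyhedron cut out by (\ref{eq:bvvSTRDP1})--(\ref{eq:bvvSTRDP3}) together with $x',y'\in[0,1]^n$, and let $P_{\RR^{tot}}\subseteq\mathbb{R}^{2n}$ be the polyhedron cut out by (\ref{eq:rrSTRDP1})--(\ref{eq:rrSTRDP3}) together with $x'',y''\in[0,1]^n$. Consider the linear map $\phi:\mathbb{R}^{2n}\to\mathbb{R}^{2n}$, $\phi(x',y')=(x'+y',\,y')$, which is exactly the substitution (\ref{eqn:subs}). Its matrix is block upper triangular with identity blocks on the diagonal, hence has determinant $1$ and is invertible, with inverse $\phi^{-1}(x'',y'')=(x''-y'',\,y'')$, which is precisely the reverse substitution (\ref{eqn:subs2}). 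Thus $\phi$ is a linear bijection of $\mathbb{R}^{2n}$.

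Next I would simply re-read the two lemmas as statements about $\phi$. Lemma \ref{lemma1} says exactly that $\phi(P_{\BVV^{tot}})\subseteq P_{\RR^{tot}}$, and Lemma \ref{lemma2} says that $\phi^{-1}(P_{\RR^{tot}})\subseteq P_{\BVV^{tot}}$, i.e., applying $\phi$ to both sides, $P_{\RR^{tot}}\subseteq\phi(P_{\BVV^{tot}})$. Combining the two inclusions yields $\phi(P_{\BVV^{tot}})=P_{\RR^{tot}}$, so $\phi$ restricts to a bijection $P_{\BVV^{tot}}\to P_{\RR^{tot}}$; this is what is meant by the two polyhedra being equivalent. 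Finally I would invoke the identity displayed just before Lemma \ref{lemma1}, namely $\sum_{i\in V}(2x'_i+3y'_i-1)=\sum_{i\in V}(2x''_i+y''_i-1)$ under (\ref{eqn:subs}), which shows $\phi$ also matches the two objective functions, so the two LP relaxations have the same optimal value; and since $\phi$, $\phi^{-1}$ have integer matrices and send the defining box constraints to one another, vertices correspond to vertices.

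There is essentially no obstacle: all the substance is already contained in Lemmas \ref{lemma1} and \ref{lemma2}. The only point that needs care is bookkeeping — the two polyhedra live in different coordinate copies of $\mathbb{R}^{2n}$, so ``equivalent'' has to be read as ``identified by the linear bijection $\phi$'' rather than as literal set equality — together with the (immediate) remark that $\phi$ is genuinely invertible, which follows at once from its triangular form.
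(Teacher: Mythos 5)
Your proposal is correct and follows the same route as the paper: the paper derives the theorem directly from Lemmas \ref{lemma1} and \ref{lemma2}, using the substitution (\ref{eqn:subs}) and its inverse (\ref{eqn:subs2}) as the identifying map. Your only addition is to make explicit the (correct) bookkeeping that this map is an invertible linear bijection, which the paper leaves implicit.
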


\begin{thm}
Polyhedrons of two relaxed formulations $\BVV$ and $\RR$ are equivalent.
\end{thm}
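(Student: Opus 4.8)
The plan is to prove Theorem~6 in exactly the same way Theorem~\ref{thmEQT} was proven: by establishing the two mutual‑containment lemmas for the SRDP relaxations $LP_{\BVV}$ and $LP_{\RR}$, using the same change of variables $x''=x'+y'$, $y''=y'$ from (\ref{eqn:subs}) and its inverse (\ref{eqn:subs2}). First I would note that $LP_{\BVV}$ and $LP_{\RR}$ differ from $LP_{\BVV^{tot}}$ and $LP_{\RR^{tot}}$ \emph{only} in the third family of constraints: (\ref{eq:rrSTRDP3}) is replaced by (\ref{eq:rrSRDP3}) and (\ref{eq:bvvSTRDP3}) by (\ref{eq:bvvSRDP3}), each obtained by summing over the closed neighborhood $N[i]$ rather than the open one $N(i)$. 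Everything else — the variable bounds $x'_i,y'_i\in[0,1]$ and $x''_i,y''_i\in[0,1]$, and the constraints (\ref{eq:bvvSTRDP1}), (\ref{eq:bvvSTRDP2}), (\ref{eq:rrSTRDP1}), (\ref{eq:rrSTRDP2}) — is literally unchanged, so those portions of the proofs of Lemmas~\ref{lemma1} and~\ref{lemma2} carry over verbatim.

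The only new ingredient is the handling of the closed‑neighborhood constraint, and here the key observation is that the pointwise identity $2x'_j+3y'_j-1 = 2(x'_j+y'_j)+y'_j-1 = 2x''_j+y''_j-1$ holds at every vertex $j$, irrespective of which constraint $j$ occurs in. Summing this identity over $N[i]$ rather than over $N(i)$ therefore gives $\sum_{j\in N[i]}(2x'_j+3y'_j-1)=\sum_{j\in N[i]}(2x''_j+y''_j-1)$, so (\ref{eq:bvvSRDP3}) for $(x',y')$ is equivalent to (\ref{eq:rrSRDP3}) for $(x'',y'')$, and symmetrically under (\ref{eqn:subs2}). Combining this with the unchanged parts yields the analogue of Lemma~\ref{lemma1} (the polyhedron of $LP_{\BVV}$ is contained in that of $LP_{\RR}$) and the analogue of Lemma~\ref{lemma2} (the reverse containment), hence the equality of the two polyhedra.

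I do not expect a genuine obstacle here; the step most worth stating carefully is simply that including the vertex $i$ itself in the summation set is harmless, because the substitution (\ref{eqn:subs})–(\ref{eqn:subs2}) is applied coordinatewise and never interacts with the ``at most one''/``at least one'' structure of the remaining constraints. I would also record, as before Lemma~\ref{lemma1}, that the objective functions still coincide under the substitution, since the same identity summed over all of $V$ gives $\sum_{i\in V}(2x'_i+3y'_i-1)=\sum_{i\in V}(2x''_i+y''_i-1)$. Thus the proof reduces to transcribing the proof of Theorem~\ref{thmEQT} with $N(i)$ replaced by $N[i]$ in the third constraint, which is why it may reasonably be omitted.
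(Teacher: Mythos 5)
Your proposal is correct and matches the route the paper intends: the paper omits this proof precisely because it is the proof of Theorem~\ref{thmEQT} (via the analogues of Lemmas~\ref{lemma1} and~\ref{lemma2} under the substitution (\ref{eqn:subs}) and its inverse (\ref{eqn:subs2})) with $N(i)$ replaced by $N[i]$ in the third constraint family. Your key observation—that the identity $2x'_j+3y'_j-1=2x''_j+y''_j-1$ is pointwise and hence indifferent to whether the sum runs over the open or closed neighborhood—is exactly the reason the analogy goes through.
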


Proof of this theorem is omitted since it is analogous to the proof of Theorem \ref{thmEQT}.

\section{Constraint programming formulation}

 Constraint programming (CP) is a  declarative programming  paradigm wherein relations between variables are defined in the form of constraints.
 Comparing to the imperative programming languages, in  CP the constraints are not organized as commands or set of commands which are sequentially
 executed, but they specify the properties of the solution.

 Detailed description of the CP methods is out of the scope of this paper and can be found in \citep{rossi2006handbook}.

Constraint programming (CP) formulations for S(T)RDP follow directly from previously described problem definitions.
CP formulations use a vector of $n=|V|$ variables denoted by the vector $\mathbf{z}=(z_1,z_2,...,z_n)$.

The correspondence with the definition (\ref{eq:f}) of the function $f$ is
obtained by defining $z_i = f(i)$ for each $i\in V$. From this definition, we derive the objective function (\ref{eq:CPf}) and  the feasible domain for each variable (\ref{eq:CPdom}). The constraint (\ref{eq:CPc1}) is derived from the implication (\ref{eq:c1}). From (\ref{eq:c2srdp}), respectively (\ref{eq:c2strdp})  we derive the relations (\ref{eq:CPc2srdp}) respectively (\ref{eq:CPc2strdp}). Thus, the formulation for SRDP is as follows:

\begin{equation}
\min_{\mathbf{z}} \sum_{i=1}^{n} {z_i}
\label{eq:CPf}
\end{equation}

\begin{equation}
z_i \in \{-1,1,2\}, \quad i \in \{1,...,n\}
\label{eq:CPdom}
\end{equation}

\begin{equation}
(z_i \neq -1) \vee  \bigvee\limits_{j \in N(i)} (z_j = 2), \quad i \in \{1,...,n\}
\label{eq:CPc1}
\end{equation}

\begin{equation}
\sum_{i \in N[i]}{z_i} > 0,\quad i \in \{1,...,n\}.
\label{eq:CPc2srdp}
\end{equation}

Similarly, in the case of STRDP, equations (\ref{eq:CPf}), (\ref{eq:CPdom})and (\ref{eq:CPc1}) are the same, while constraints (\ref{eq:CPc2srdp}) are replaced with:

\begin{equation}
\sum_{i \in N(i)}{z_i} > 0,\quad i \in \{1,...,n\}.
\label{eq:CPc2strdp}
\end{equation}


\section{Variable neighborhood search method for SRDP and STRDP}

Variable neighborhood search (VNS) is an optimization metaheuristic, firstly proposed in \citep{mladenovic1997variable}
.

VNS belongs to the class of "single point search" optimization techniques, since it improves a specific solution by systematically exploring its neighborhoods. In order to avoid premature convergence in a local suboptimum, VNS uses a specific strategy of changing the neighborhoods, following the empirical observation that locally optimal solutions found in different neighborhoods are correlated in a way that they also could hold pieces of information about
the global optimum.
As a robust, effective and very adaptable metaheuristic, in recent years VNS has been used  for solving a wide range of optimization problems. The review of these results is out of the scope of this paper and the reader is advised to look in \citep{hansen2010variable} for a detailed overview of VNS methods and its applications.

The simplified scheme of the VNS can be described as follows:

\begin{itemize}
  \item Method is based on the single solution that is iteratively improved;
	\item Process starts with an initial solution that is usually randomly chosen;
  \item Then, iteratively, the following process is done:
	\begin{itemize}
		\item The solution is moved temporarily to a certain neighborhood solution w.r.t. the given neighborhood structure (this process is called shaking);
		\item The temporarily moved solution is then systematically improved by using local search procedure (LS);
		\item Finally, the permanent movement is performed if the new solution is better than previous solution.
	\end{itemize}
\end{itemize}

The most important tasks that must be addressed when dealing with the VNS method are constructing the neighborhood structure (along with shaking mechanism) and local search procedure. Neighborhood structure and shaking mechanism are necessary in providing exploratory, i.e. diversification properties to a method.
On the other hand, the local search procedure is vital for controlling intensification nature of VNS.
LS therefore considers systematic check of nearby solutions w.r.t. solution obtained during the shaking.
Since it is exhaustive, LS needs to be carefully designed and efficiently implemented for a given problem.

The neighborhood structure is a finite list of neighborhoods $N=(N_{k_{min}},...,N_{k_{max}})$,
while $N_k(\mathbf{z})$ is a set of solutions in the $k$-th neighborhood of the solution $\mathbf{z}$.
The most common structure is such that consecutive neighborhoods have increasing cardinality of the solution neighborhoods sets,
i.e. $|N_{k_{min}}(\mathbf{z})|<|N_{k_{min+1}}(\mathbf{z})|<...<|N_{k_{max}}(\mathbf{z})|$.
VNS initially starts with the first neighborhood, thus it sets $k:=k_{min}$. During the shaking process, a candidate solution is randomly selected  from the neighborhood $N_k(\mathbf{z})$ and the local search is then applied to that candidate. If no improvement happen in the local search, the shaking procedure then moves to the  next neighborhood $N_{k+1}$. Otherwise, if the improvement occurs, $k$ is again set to $k_{min}$.

If the improvement does not happen to the point when $k$ reaches $k_{max}$, the $k$ is set back to $k_{min}$.
VNS stops its execution when the stopping criterion is met, e.g. the maximum number of iterations is reached.

\subsection{The overall scheme of VNS for SRDP and STRDP}

Pseudocode of the proposed VNS method for SRDP and STRDP is given in Figure~\ref{figVNSPseudo}. The method uses 4 control parameters: $k_{min}$ and $k_{max}$ are controlling the neighborhood structure, $it_{max}$ defines the maximum number of iterations, while $prob$ parameter is related to the movement procedure. The movement procedure functions as follows: after performing shaking and local search on the current solution $\mathbf{z}$, the newly obtained candidate solution $\mathbf{z''}$ becomes the new current solution if it is strictly better than $\mathbf{z}$. However, in the case when this solution is equally good as the previous, the $prob$ parameter defines the probability threshold for the movement. If a randomly generated number from $[0,1]$ is lower than $prob$, the movement is performed, otherwise the current $\mathbf{z}$ remains unchanged.
The fifth parameter is a flag variable that defines the type of problem, SRDP or STRDP. This parameter is used in the calculation of the objective value, thus it is passed to initialization and local search procedures.

\begin{figure}[h!]
\centering
\includegraphics{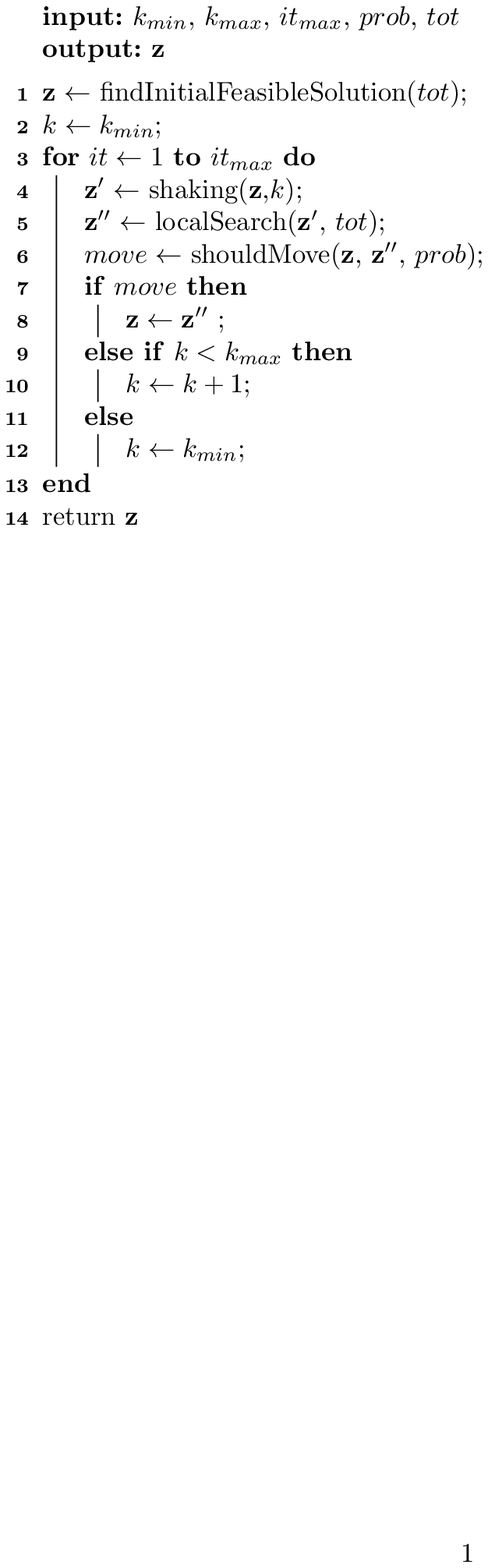}
 \caption{The proposed VNS method for SRDP and STRDP}
 \label{figVNSPseudo}
\end{figure}

\subsection{Solution representation}
Let $G=(V,E)$ be an undirected graph and  $n=|V|$. The solution of VNS is a vector $\mathbf{x}$ of the size $n$. The values of the corresponding $\mathbf{x}$ coordinates are defined as: $z_i = f(i), \; i \in V$, where $f$ is function introduced in (\ref{eq:f}). Vector $\mathbf{z}$ denotes a feasible solution if and only if the corresponding function $f$ satisfy conditions (\ref{eq:c1}) and (\ref{eq:c2strdp}) in the case of STRDP, or conditions (\ref{eq:c1}) and (\ref{eq:c2srdp}) in the case of SRDP.

\subsection{Objective and penalty functions}

The objective function, denoted by $obj_f$, corresponds to the Roman domination number (\ref{eq:w}). Therefore, it is calculated as a sum of the coordinates of the solution vector $\mathbf{z}$ .

\begin{equation}
obj_f(\mathbf{z}) = \sum_{i=1}^{n}{z_i}
\label{eq:obj}
\end{equation}
From the definition (\ref{eq:obj}), one can notice that it does not include the consideration whether a solution is feasible or not. Therefore, an additional mechanism for dealing with infeasible solutions must be introduced. One possibility to handle an infeasible solution is to set its objective value to infinity which could lead to discarding of that solution from the search space.


However, during the execution, VNS often needs to assess the quality of the intermediate solutions that tend to be \emph{slightly} infeasible. Therefore, VNS uses penalty function, defined in such a way that it can make more precise difference between two solutions, even if one or both of them are infeasible. We denote the penalty function by $pen_f$ and calculate it as follows:

\begin{equation}
pen_f(\mathbf{z}) = (1+f_1(\mathbf{z})) (1+f_2(\mathbf{z})) - 1 + f_3(\mathbf{z})
\label{eq:pen}
\end{equation}

The function $f_1$  calculates the number of violations of the constraints (\ref{eq:c1}), i.e. the number of nodes with the value -1, which do not have a neighbor with value 2.

\begin{equation}
f_1(\mathbf{z}) = |\{f(z_i)=-1 \land ((\nexists j)\;j \in N(i) \land f(z_j)=2)\;| \; i \in \{1,...,n\}\}|
\label{eq:f1}
\end{equation}

The function $f_2$ measures the degree of satisfiability of the constraints (\ref{eq:c2srdp}) for SRDP (respectively (\ref{eq:c2strdp}) for STRDP). The equation~(\ref{eq:f2}) is related to SRDP (the equation for STRDP case is very similar, it only uses open neighborhood instead).

\begin{equation}
f_2(\mathbf{z}) = \sum_{i=1}^{n}{\left(1-\sum_{j \in N[j]}{z_j}\right)}
\label{eq:f2}
\end{equation}

The function $f_3$ represents a scaling of the  function $obj_f$ to interval [0,1]:

\begin{equation}
f_3(\mathbf{z}) = \frac{obj_f(\mathbf{z}) + n}{3 n}.
\label{eq:f3}
\end{equation}

Scaling truly maps to [0,1], since the assignment of the minimal total value corresponds to the scenario where all nodes have the value of -1, which gives a total value of $n$. On the other hand, the assignment of the maximal total value corresponds to the scenario where all nodes receive the value of 2, which in total gives a value of $2n$.

Finally, the overall interpretation of $pen_f$ can be explained. We can notice that the expression $(1+f_1(\mathbf{z})) (1+f_2(\mathbf{z})) - 1$ is a kind of penalty component, that penalizes infeasible solutions w.r.t. the number of unsatisfied  constraints (\ref{eq:c1}) and the total magnitude of unsatisfied constraints (\ref{eq:c2srdp}), respectively (\ref{eq:c2strdp}). If solution is feasible,  the expression $(1+f_1(\mathbf{z})) (1+f_2(\mathbf{z})) - 1$ is equal to zero.
The remaining component $f_3(\mathbf{z})$ is related to the actual quality of the solution w.r.t. the Roman domination number.

\subsection{Intialization}

The initial solution uses the zero vector $\mathbf{z}$ as a starting point. After this initial step, a random value from the set $\{-1,1,2\}$ is iteratively assigned to a randomly chosen node, until the solution  becomes feasible. When this iterative process is finished, the solution is remembered, and an improvement, called \emph{improvement probing} is performed:

\emph{Improvement probing} is done by trying to decrease the nodes with the value 1 to the value -1, or nodes with the value 2 to the value of 1. If this decrement still produces a feasible solution, that new solution is remembered as the current one.

\subsection{Shaking}

 For the given value $k$, the shaking procedure randomly chooses some $k$ (out of $n$) nodes of the solution vector $\mathbf{z}$ and  increases their values in order to obtain the next feasible value. Candidate nodes are only those that can be increased not to violate the constraints, i.e. the nodes with the value of 2 are ignored.
At the same time, the shaking chooses mostly $k$ nodes to decrease their values. In this case, the candidate nodes are those with the values 1 or 2. For each increased node there must be the corresponding decreased node, therefore, the process consists of a loop of mostly $k$ iterations, where in each iteration, a pair of nodes is selected: one suitable for decrement, and the other, suitable for  increment.


If the solution vector remains feasible after these changes, the previously described \emph{improvement probing} is performed.

\subsection{Local search}

Local search systematically tries exclusion of all positively valued nodes for a given solution vector.
Then, for a fixed excluded node, the procedure finds the best node to be included. One node is better to be included than another if it yields lower penalty function value w.r.t. (\ref{eq:pen}).
If some swapping pair (exclusion/inclusion nodes) corresponds to a penalty value better than the current solution, the swap is immediately applied, and LS continues. If all positively valued nodes are tried without success, LS finishes.

Penalty function calculation is efficiently implemented so that it does not require full recalculation, but only the partial calculation that  reuses current penalty value.

The outline of this procedure is shown in Figure~\ref{figLSPseudo}.

\begin{figure}[h!]
\centering
\includegraphics{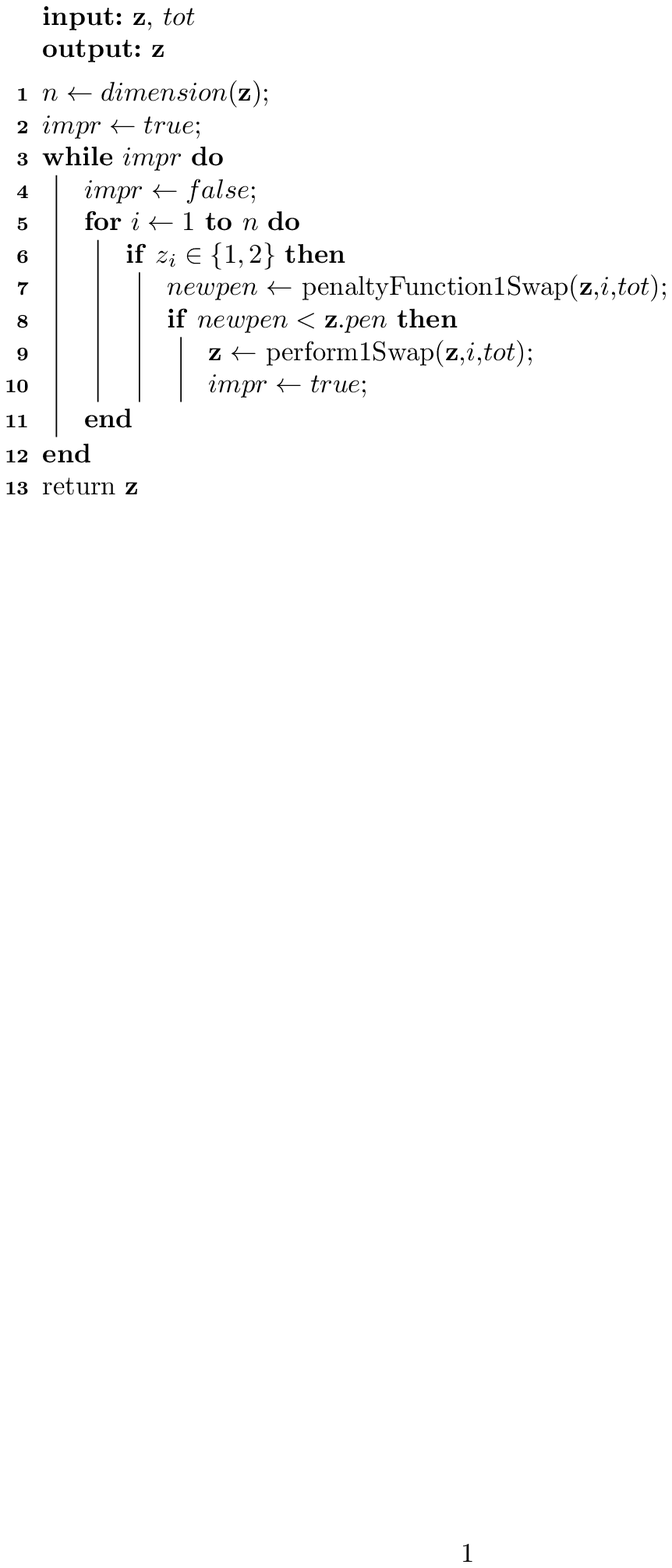}
 \caption{1-swap first improvement local search}
 \label{figLSPseudo}
\end{figure}

\section{Computational results}

This section contains computation results obtained by applying all introduced methods on a large set of instances. All the instances are originally presented in \citep{curro2014roman}. They represent 6 different classes of synthetic graph instances, namely: \emph{bipartite}, \emph{grid}, \emph{net}, \emph{planar}, \emph{random} and \emph{recursive} graphs.

Since all the methods are tested on each set of instances, the complete results of testing are too extensive to be presented in this paper. Therefore, full results are available in the form of online resource\footnote{\url{http://www.matf.bg.ac.rs/~kartelj/sci/strdp/strdp_detailed_results.pdf}} on the website of the third author.

The set of bipartite graphs contains total of 81 instances, with the dimensions from 50 to 400 vertices. The density of generated graphs is controlled by the probability parameter $p$, which varies between 0.01 and 0.9. The greater value of $p$ indicates the greater density of the graph.  The label of a bipartite instance holds information about the dimension of the instance. The first two numbers in the label are the cardinalities of two independent sets, while the third number indicates the density. For example, from the instance's label 100-100-10, one can conclude that the graph contains 200 vertices (two independent sets with 100 vertices in each set) and that the probability of the edge existence between two vertices is 10\%.

The set of grid graphs is the largest one and contains 171 instances. The label of each grid graph indicates its dimension. The smallest grid graph in the set is of the dimension 3x3, while the largest one contains 600 vertices and it is of the dimension 30x20.

Tests are performed on 4 so called net graphs. Each net graph can be constructed from the grid graph by adding additional edges that connect ``the closest'' diagonal vertices. The dimensions of the considered net graphs are from 10x10 to 30x20.

The set of planar graphs contains total of 17 instances. For each planar graph, a pair of geometric coordinates is associated to each vertex, which are further randomly linked by giving a higher probability to nearer vertices. The instance name contain the number of vertices of the planar graph.

The last two classes are random graphs and recursive graphs, containing 72 and 7 instances respectively. The dimensions of random instances are from 50 to 200 vertices, with the given probability of existing of the edges. Similarly to the bipartite instances, the name of an instance contains the number of vertices and the probability of controlling the graph density. The dimensions of recursive graphs are from 7 to 3283 vertices.

Integer programming models $\RR$ and $\BVV$ are executed on 64-bit version of IBM ILOG CPLEX optimization solver, version 12.6.3. We use Python programming language to preprocess the input graphs and to invoke the CPLEX library. The termination of execution can happen in two cases: the optimal solution is found or the time limit of 7200 seconds is reached.

Constraint programming model $CP$ is solved by using 64-bit version of IBM ILOG CP solver, version 12.6.3. Interfacing with CP library is done through C++ programming language. Similarly as CPLEX solver, the CP solver finishes its execution if optimal solution is obtained or if the time limit of 7200 seconds is reached.

The implementation of the VNS is written in C programming language.
Termination criterion is based on the maximum number of iterations $it_{max}=50000$. Other control parameters are set in the following way: $k_{min}=2$, $k_{max}=30$ and $prob=0.5$.

All methods are executed on the  Intel Xeon E5410 @2.33GHz processor, 16GB RAM under the Windows Server 2012 operating system.

In the paper we present the summarized results obtained by all methods.
The results for SRDP are summarized in  Table \ref{tab:tableSummary1}, while the results for STRDP are shown in Table \ref{tab:tableSummary2}. Both tables are organized in the same way. The first column contains the names of the methods. For each method and for each class of graphs, we summarize the following data listed in the second column: total number of achieved optimal solutions, total number of achieved best solutions comparing to the other methods, average value over all instances of each class, as well as the average execution time in seconds over all instances of each class. The following 5 columns contain results obtained by each method on each of 5 classes of instances. In the first row of the table, we put the name of each class of instances as well as the total number of instances in the class. In the last column, we summarize the results obtained on all  instances by each method.

From Table \ref{tab:tableSummary1} one can see that two ILP models are more successful in finding optimal solutions than the other two methods and the $\RR$ model is even more successful than $\BVV$. If we take a look at the last column, we can see that $\RR$ finds 344 out of 352 best solutions, $\BVV$   268 out of 352, while VNS and CP are weaker, obtaining 159 and 197 best solutions. It should be noticed that in many cases, CP could suggest solutions equal to the optimal ones obtained by ILP models, but it could not verify them as optimal in 7200s. Therefore, the average execution time for CP is near to 7200s, since the CP used much time trying to find the results or to verify the optimality of the found results. Because of the specific nature of the recursive instances, in the optimal solutions many vertices are assigned to -1, so the signed (total) Roman domination numbers for all recursive graphs are negative, with the exception of the smallest one. Even without a deeper look at the detailed results, from Table \ref{tab:tableSummary1} that fact can be partially concluded by considering the average values obtained on the recursive instances. Although VNS is less successful than the ILP models, the execution time for this method is less than for the other methods.

\begin{table}
  \centering
  \caption{Summary results for SRDP.}
  \label{tab:tableSummary1}
	\resizebox{\textwidth}{!}{
  \begin{tabular}{|ll|rrrrrr|r|}
	\hline
Method&Measure&\shortstack[r]{Bipartite \\ N=81}&\shortstack[r]{Grid \\ N=171}&\shortstack[r]{Net \\ N=4}&\shortstack[r]{Planar \\ N=17}&\shortstack[r]{Random \\ N=72}&\shortstack[r]{Recursive \\ N=7}&\shortstack[r]{All \\ N=352}\\
\hline
\RR&\#opt&32&133&4&5&40&7&221\\
&\#best&81&167&4&15&70&7&344\\
&avg. value&22.4&28.9&41&1.1&11.3&-323.3&15.6\\
&avg. time&4523.8&1793.4&80&5141.7&3425.8&0.8&2862.2\\
\hline
\BVV&\#opt&31&130&4&5&38&7&215\\
&\#best&41&154&4&6&56&7&268\\
&avg. value&24&29&41&3.4&11.7&-323.3&16.2\\
&avg. time&4675.1&2199.6&113.7&5512.3&3959.4&2.5&3221.8\\
\hline
VNS&\#opt&25&84&3&4&28&4&148\\
&\#best&31&84&3&4&33&4&159\\
&avg. value&24&31.2&41.3&6.2&12.2&-317.6&17.6\\
&avg. time&1504.1&125&1341&5591.6&409.9&7288.9&920.9\\
\hline
CP&\#opt&23&124&4&4&24&4&183\\
&\#best&23&138&4&4&24&4&197\\
&avg. value&36.4&29.6&41&47.4&16.2&-291.3&23\\
&avg. time&7227.9&6688.1&7202.5&6393&7231.6&4130.3&6864.2\\

\hline
\end{tabular}}
\end{table}

From Table \ref{tab:tableSummary2}, we can see the behaviour of the proposed methods applied on solving STRDP. All methods are more successful than in the case of SRDP. $\RR$ model is again more successful than the other methods, finding the best solution in 347 out of 352 cases. The second ILP model is also better than VNS and CP, finding 287 out of 352 best solutions. In the case of STRDP, VNS is more successful than for SRDP and finds  241out of 352 best solutions. CP is also slightly better than in the case of SRDP, finding 209 out of 352 best solutions. Similar to the case of SRDP, the execution time for VNS is less than for the other methods.

$\RR$ model is especially efficient in solving grid graph instances, especially in the case of STRDP, where $\RR$ model succeeds to find almost all optimal solutions (169 out of 171). Both ILP models are also very successful in solving recursive instances, since they succeed to find all optimal solutions for both problems for each recursive instance. The other two methods find 4 out of 7 best solutions for these instances. Since the sets of bipartite and random graphs  contain many large-scale instances, the exact methods cannot find optimal solutions due to time and memory limitations. This is the case with most bipartite and random graph instances containing 100 vertices and more. On the other hand, even for these instances, the ILP models are better than other two methods in most cases. For these large instances, VNS is more successful than CP and finds better results in general.

\begin{table}
  \centering
  \caption{Summary results for STRDP.}
  \label{tab:tableSummary2}
	\resizebox{\textwidth}{!}{
  \begin{tabular}{|ll|rrrrrr|r|}
	\hline
Method&Measure&\shortstack[r]{Bipartite \\ N=81}&\shortstack[r]{Grid \\ N=171}&\shortstack[r]{Net \\ N=4}&\shortstack[r]{Planar \\ N=17}&\shortstack[r]{Random \\ N=72}&\shortstack[r]{Recursive \\ N=7}&\shortstack[r]{All \\ N=352}\\
\hline
RR&\#opt&36&169&1&5&44&7&262\\
&\#best&79&169&4&17&71&7&347\\
&avg. value&25.8&35.6&55.5&0.2&13.2&-281.9&21\\
&avg. time&4200.7&125.5&5407.9&5338&3042.2&2.7&1969.2\\
\hline
BVV&\#opt&34&169&1&5&41&7&257\\
&\#best&43&171&1&7&58&7&287\\
&avg. value&27.2&35.6&57.5&3.1&13.4&-281.9&21.5\\
&avg. time&4451&160&5408.6&5510.1&3738.9&2.9&2194.4\\
\hline
VNS&\#opt&30&164&0&4&30&4&232\\
&\#best&36&164&0&4&33&4&241\\
&avg. value&27&35.7&58.5&5.5&14&-277.9&21.8\\
&avg. time&1863&157&1311.4&7059.6&529.4&9588.8&1159.8\\
\hline
CP&\#opt&25&147&1&4&28&4&209\\
&\#best&25&147&1&4&28&4&209\\
&avg. value&40.3&36.1&64.8&47.2&18.4&-228.9&29.1\\
&avg. time&6976.7&6499.6&7201.2&6386.7&7210.2&5084.5&6729.1\\
\hline
\end{tabular}}
\end{table}

\section{Conclusions}
This paper is devoted to  two variants of the Roman domination problem: the signed Roman domination problem and the signed total Roman domination problem.

For solving each  of two
problems we introduce three exact and one heuristic method: two integer linear programming formulations, constraint programming formulation and the variable neighborhood search heuristic method. For both ILP formulations we provide the proof of the correctness.  Both presented models use polynomial number of variables and constraints, indicating that the models can be used both in theory and practice. By establishing the correspondence between the dominating functions and the variables in the CP model, the CP constraints are directly derived from the constraints of the starting problems.

Variable neighborhood search implements a suitable system of neighborhoods which is based on changing the assigned values to the increasing number of pairs of vertices. In each change, the value assigned to one vertex is increased and the value assigned to another is decreased. Similar system is also applied in the local search procedure, allowing the improvement of the quality of the solution by continuously applying the proposed changes to the pairs of vertices. Specially designed penalty function controls the quality of the solution, allowing the algorithm to handle  the infeasible solutions in order to keep their potential quality.

The experimental results are performed on the large sets of instances from the literature used for solving other Roman domination problems. In the comprehensive experiments, all four methods are tested on each instance, obtaining a huge set of results. The overall conclusion is that two ILP models in most cases are more successful than two other methods.

This research can be extended in several ways. The proposed methods can be applied on solving some other variants of Roman domination problems. It could be also interesting if proposed exact and heuristic approaches are combined in a new hybrid method for solving Roman domination problems.

\section*{Funding}
This work was supported by the Ministry of Education, Science and Technological Development, Republic of Serbia under Grant Number 174010 and  Ministry for Scientific and Technological Development, Higher Education and Information Society, Government of Republic of Srpska, B\&H, under the project ``Development of artificial intelligence methods for solving computational biology problems'' (2020).

\bibliographystyle{tfcse}
\bibliography{mybibfile}

\end{document}